\documentclass[12pt,reqno]{amsart}

\usepackage{mathrsfs}
\usepackage[textwidth=16cm,hmarginratio=1:1]{geometry}
\usepackage[colorlinks=true,linkcolor=blue,citecolor=blue,urlcolor=blue]{hyperref}

\theoremstyle{plain}
\newtheorem{theorem}{Theorem}[section]
\newtheorem{proposition}[theorem]{Proposition}
\newtheorem{lemma}[theorem]{Lemma}
\newtheorem{corollary}[theorem]{Corollary}
\theoremstyle{remark}
\newtheorem{remark}[theorem]{Remark}
\newtheorem{example}[theorem]{Example}
\numberwithin{equation}{section}

\newcommand{\ZZ}{\mathbb{Z}}
\newcommand{\fg}{\mathfrak{g}}
\newcommand{\bw}{\mathbf{w}}
\newcommand{\bv}{\mathbf{v}}
\newcommand{\ba}{\mathbf{a}}
\newcommand{\bdelta}{\boldsymbol{\delta}}
\DeclareMathOperator{\wt}{wt}
\DeclareMathOperator{\supp}{supp}

\title{Lusztig Parameters and Factorizations of Determinantial Modules}
\author{Yingjin Bi}
\address{Department of Mathematics, Harbin Engineering University; Research Institute for Mathematical Sciences, Kyoto University}
\email{yingjinbi@mail.bnu.edu.cn}
\date{}

\begin{document}
\raggedbottom

\begin{abstract}
We give an explicit formula for the Lusztig parameters of
determinantial modules over symmetric quiver Hecke algebras.
The formula applies to every Bruhat pair and every fixed reduced
expression of the upper Weyl group element.
In finite types $A$, $D$, and $E$, we identify the parameters of
the modules in M\'enard's seed using Demazure weaves.
We then factor prefix determinantial modules into these seed
modules, with multiplicities given by Lusztig cycles.
We also give a criterion for recognizing monomials in this fixed
seed and an explicit formula recovering their multiplicities
from Lusztig parameters.
\end{abstract}

\maketitle

\section{Introduction}

Quantum minors play an important role in the quantum cluster
structures on quantum unipotent coordinate rings
\cite{geiss2013cluster}. Under quiver Hecke categorification,
the quantum minor $D(w\Lambda,v\Lambda)$ corresponds to the
self-dual determinantial module $M(w\Lambda,v\Lambda)$, where
$v\leq w$ and $\Lambda$ is a dominant integral weight
\cite{kkop2018strata}.
For a reduced expression $\bw$ of $w$, each simple module in
the quantum unipotent category $\mathscr C_w$ is determined,
up to grading shift, by its $\bw$-Lusztig parameter.
Computing this parameter therefore provides a way to identify
determinantial modules and study their convolution factorizations.

The familiar description in terms of $\mathbf i$-boxes treats
the special case
\[
 w=vu,\qquad \ell(w)=\ell(v)+\ell(u),
\]
with a compatible reduced expression $\bw=\bv\mathbf u$.
For fundamental weights, the corresponding nontrivial
determinantial modules are associated with $\mathbf i$-boxes,
and their Lusztig parameters record the occurrences of the
relevant simple root in the $\mathbf u$-block; see
\cite[Proposition~4.6 and Corollary~4.7]{kkop2018strata}
for the cuspidal decomposition and
\cite[Sections~3.2 and~4.4]{kkop2024affineII}
for the $\mathbf i$-box formulation.
This description does not directly give the parameters for
an arbitrary pair $v\leq w$ and a prescribed reduced expression
of $w$.

Our first result gives such a formula in all symmetric
Kac--Moody types.
Fix $\bw=(i_1,\ldots,i_r)$, and choose the leftmost reduced
subexpression of $v$ in $\bw$.
It determines a vector $\bdelta_{\bw,v}(\Lambda)$: each selected
coordinate is zero, while an unselected coordinate is obtained
by pairing $\Lambda$ with the simple coroot at that position,
transformed by the selected suffix.
Theorem~\ref{thm:lusztig-determinantial} proves that
\[
 \ba^{\bw}\bigl(M(w\Lambda,v\Lambda)\bigr)
 =\bdelta_{\bw,v}(\Lambda).
\]
The precise coordinates are given in~\eqref{eq:delta-definition}.
The proof follows the recursive construction of Lusztig
parameters using crystal operators and Saito reflections.
Its key step is the simultaneous-descent identity in
Lemma~\ref{lem:determinantial-simultaneous-descent}.

The factorization problem is motivated by cluster structures
on open Richardson varieties.
Leclerc~\cite{leclerc2016cluster} constructed a cluster
subalgebra of the coordinate ring of an open Richardson variety,
with initial variables given by irreducible factors of certain
generalized minors.
M\'enard~\cite{menard2022seeds} subsequently gave an explicit
algorithm for computing the associated seeds.
At the categorical level, the author proved that the factors of
the prefix determinantial modules
$M(w_{\leq k}\varpi_{i_k},v_{\leq k}\varpi_{i_k})$
are the cluster-variable modules of the corresponding seed
\cite[Theorems~5.11 and~5.15]{bi2026monoidalrichardson}.
Kashiwara--Kim--Oh--Park~\cite{kashiwara2026seed}
constructed these modules using the operators $\mathcal K_i$
and proved the existence of such factorizations, together with
a unitriangularity property of their multiplicities.
Here our aim is to compute the multiplicities explicitly.

In type~$A$, Serhiyenko--Sherman-Bennett~\cite{serhiyenko2024leclerc}
related the generalized minors in Leclerc's construction to
Ingermanson's chamber minors~\cite{ingermanson2019cluster}
through the Richardson twist.
Under the identification of the open Richardson variety with
the corresponding braid variety, these Ingermanson minors
coincide with the braid-variety chamber minors
\cite[Proposition~10.5]{galashin2022braidI}.
Consequently, the chamber-minor factorizations of
Galashin--Lam--Sherman-Bennett~\cite{galashin2026braidII}
and their type~$A$ counterpart~\cite{galashin2022braidI}
can be transported through the twist to obtain factorizations
of the corresponding generalized minors in type~$A$.

Although chamber-minor factorizations are available in general
type~\cite{galashin2026braidII}, an analogous identification
between the generalized minors in Leclerc's construction and
the braid-variety chamber minors has not, to our knowledge,
been established in types~$D$ and~$E$.
Thus the existing classical factorizations do not directly
yield the desired factorizations of generalized minors in
these types.
Our second goal is to establish explicit quantum and
categorical factorizations in all finite types $A$, $D$,
and~$E$, without relying on such an identification.

For this purpose, choose a reduced word $\mathbf h$ for
$w_0v^{-1}$ and use the right inductive Demazure weave for
$\mathbf h\bw$.
Its trivalent vertices index the cluster-variable modules
$M_u$ of M\'enard's seed $\mathbf s(v,\bw)$
\cite{casals2025cluster}.
The weave vectors and their local mutation rules were developed
in~\cite{bi2026monoidal}. In particular, Theorem~6.17 of that
paper gives the coroot identity used below, while Theorem~7.12
computes the parameters of quantum grid minors in bosonic
extension algebras.
In the present paper, we identify the weave vectors with the
ordinary $\bw$-Lusztig parameters of the modules $M_u$
(Proposition~\ref{prop:weave-parameters}) and combine this
identification with our determinantial parameter formula.
This yields, for every $\Lambda\in P_+$,
\[
 M(w_{\leq k}\Lambda,v_{\leq k}\Lambda)
 \simeq
 \bigodot_{q_u\leq k}
 M_u^{\odot\langle\gamma_{u,k},\Lambda\rangle}.
\]
Here $v_{\leq k}$ is the selected prefix, $q_u$ is the position
creating $u$, and $\gamma_{u,k}$ is the coroot obtained from
the Lusztig cycle of $u$ on the slice after step $k$.
Thus the factorization multiplicities of the usual
determinantial modules are given by explicit nonnegative
coroot pairings.
Passing to Grothendieck classes gives the corresponding
factorizations of quantum minors.

Finally, the triangular recursion for the weave vectors gives
a linear criterion for recognizing monomials in this fixed
seed and a formula for recovering their unique multiplicities
directly from Lusztig parameters
(Proposition~\ref{prop:seed-monomial}).
Section~\ref{sec:preliminaries} recalls the required categorical
and crystal facts.
Section~\ref{sec:parameters} proves the determinantial parameter
formula, and Section~\ref{sec:weaves} develops its finite-type
consequences and illustrates the computations with examples.\\

\noindent
\textbf{Use of AI tools.} The author used ChatGPT (OpenAI) to assist with language editing, the organization and presentation of the manuscript, and the formulation, revision, and logical checking of parts of the mathematical arguments. The author takes full responsibility for all mathematical statements and proofs, the accuracy of the references, and the final content of the manuscript.

\section{Preliminaries}\label{sec:preliminaries}

We recall the conventions and standard facts used in the proofs.
Unless otherwise stated, the Cartan datum is symmetric, but need not
be of finite type.

\subsection{Quiver Hecke modules and quantum unipotent categories}

Let $\fg$ be a symmetric Kac--Moody algebra with index set $I$,
Weyl group $W$, simple roots $\alpha_i$, simple coroots
$h_i=\alpha_i^\vee$, and fundamental weights $\varpi_i$.
Set $Q_+=\sum_{i\in I}\ZZ_{\geq0}\alpha_i$ and $Q_-=-Q_+$.
We write $P_+$ for the dominant integral weights and $Q^\vee$ for
the coroot lattice. The pairing $\langle h,\lambda\rangle$ has the
coroot in its first argument. The symbols $\leq$ and $\ell$ denote
Bruhat order and the length function on $W$, respectively.

Let $R$ be a symmetric quiver Hecke algebra over an algebraically
closed field of characteristic zero. We write $R$-$\mathrm{gmod}$
for the direct sum, over $\nu\in Q_+$, of the categories of
finite-dimensional graded $R(\nu)$-modules. Convolution, denoted
by $\circ$, is the induction product. For $X\in R(\nu)$-$\mathrm{gmod}$,
our weight convention is $\wt(X)=-\nu$. A grading shift does not
change the weight or any of the parameters considered below.

For $X\in R(\nu)$-$\mathrm{gmod}$, define its restriction supports by
\[
 \begin{split}
 \mathcal W(X)&=\{\gamma\in Q_+\cap(\nu-Q_+):
       \operatorname{Res}_{\gamma,\nu-\gamma}X\neq0\},\\
 \mathcal W^*(X)&=\{\gamma\in Q_+\cap(\nu-Q_+):
       \operatorname{Res}_{\nu-\gamma,\gamma}X\neq0\}.
 \end{split}
\]
For $w\in W$, the quantum unipotent category $\mathscr C_w$ is
the full subcategory defined by
\[
 X\in\mathscr C_w\quad\Longleftrightarrow\quad
 \mathcal W(X)\subseteq Q_+\cap wQ_-.
\]
This condition is imposed on each $R(\nu)$-module summand of $X$.
The category is stable under subquotients, extensions, convolution,
and grading shifts; see \cite[Section~2]{kkop2018strata}.

Every simple module has a unique self-dual grading shift, where
duality means graded linear duality with respect to the standard
anti-involution of $R$. A simple module $X$ is \emph{real} if
$X\circ X$ is simple. Two simple modules \emph{strongly commute}
if their convolution is simple. For a pairwise strongly commuting
family of self-dual real simple modules, $\bigodot$ denotes the
convolution with the unique grading shift making it self-dual.
Empty products and zeroth powers mean the tensor unit.

\subsection{Crystal operators and Saito reflection}

We identify simple modules that differ by a grading shift.
The crystal operators $\widetilde E_i,\widetilde F_i$ act on the
left, and $\widetilde E_i^*,\widetilde F_i^*$ act on the right.
In particular, for a simple module $X$,
\[
 \begin{aligned}
 \varepsilon_i(X)&=\max\{n\geq0:n\alpha_i\in\mathcal W(X)\},\\
 \varepsilon_i^*(X)&=\max\{n\geq0:n\alpha_i\in\mathcal W^*(X)\}.
 \end{aligned}
\]
Set $\varphi_i(X)=\varepsilon_i(X)+\langle h_i,\wt(X)\rangle$.
Under the identification with $B(\infty)$, Saito reflection is
the bijection $T_i$ from $\varepsilon_i^*=0$ to $\varepsilon_i=0$
given by
\[
 T_iX=(\widetilde F_i^*)^{\varphi_i(X)}
                \widetilde E_i^{\,\varepsilon_i(X)}X.
\]
These conventions agree with \cite{saito1994pbw} and
\cite[Section~4]{kimura2012quantum}.

\begin{lemma}[Stabilized Saito reflection]\label{lem:saito-stabilized}
Let $X$ be a simple module with $\varepsilon_i^*(X)=0$.
For every integer $s\geq\varphi_i(X)$, one has
\begin{equation}\label{eq:saito-stabilized}
 T_iX=\widetilde E_i^{\,s-\langle h_i,\wt(X)\rangle}
               (\widetilde F_i^*)^sX.
\end{equation}
\end{lemma}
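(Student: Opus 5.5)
We reduce to the case $s=\varphi_i(X)$, where \eqref{eq:saito-stabilized} reads $T_iX=\widetilde E_i^{\,\varepsilon_i(X)}(\widetilde F_i^*)^{\varphi_i(X)}X$. Indeed, $\varphi_i(X)-\langle h_i,\wt(X)\rangle=\varepsilon_i(X)$. So there are two tasks. The first is to show that the left and right operators in the definition of $T_i$ may be interchanged. The second is to show that the expression on the right of \eqref{eq:saito-stabilized} does not depend on $s$ once $s\geq\varphi_i(X)$.

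Both tasks concern only the interaction of the $i$-string for the left action with the $i$-string for the right action. We would therefore use the standard description of this interaction in $B(\infty)$, due to Kashiwara and Saito. For $b\in B(\infty)$, set
\[
 \kappa_i(b)=\varepsilon_i(b)+\varepsilon_i^*(b)+\langle h_i,\wt(b)\rangle .
\]
Then $\widetilde E_i$ and $\widetilde F_i$ commute with $\widetilde F_i^*$, and with $\widetilde E_i^*$ when the latter is nonzero, provided $\kappa_i>0$. When $\kappa_i\leq0$, the right operator $\widetilde F_i^*$ raises $\varepsilon_i$ by one. Via the categorification, these identities hold for simple modules with the conventions fixed above. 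Equivalently, the two-sided $i$-structure of $b$ is modelled on the $U_q(\mathfrak{sl}_2)\otimes U_q(\mathfrak{sl}_2)$ picture. In that picture the pair $(\varepsilon_i,\varepsilon_i^*)$ moves along an explicit piecewise-linear pattern.

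Set $e=\varepsilon_i(X)$ and $\lambda=\langle h_i,\wt(X)\rangle$. Since $\varepsilon_i^*(X)=0$, we have $\kappa_i(X)=e+\lambda=\varphi_i(X)$.

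\begin{itemize}
\item Consider first the elements $(\widetilde F_i^*)^tX$ for $0\leq t\leq\varphi_i(X)$. Here $\varepsilon_i^*=t$ and $\wt$ decreases by $t\alpha_i$, so $\kappa_i$ decreases by $t$. Hence $\kappa_i>0$ while $t<\varphi_i(X)$. By the commutation rule, $\varepsilon_i$ stays equal to $e$ on this range, and each $\widetilde E_i$ commutes past the applied $\widetilde F_i^*$. Applying $\widetilde E_i^{\,e}$ then gives $\widetilde E_i^{\,e}(\widetilde F_i^*)^{\varphi_i}X=(\widetilde F_i^*)^{\varphi_i}\widetilde E_i^{\,e}X=T_iX$. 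This proves the case $s=\varphi_i(X)$.
\item For $s>\varphi_i(X)$, every further application of $\widetilde F_i^*$ happens at $\kappa_i\leq0$. Each one raises $\varepsilon_i$ by one and is undone by a single extra $\widetilde E_i$, in the sense that $\widetilde E_i\widetilde F_i^*Y=Y$ whenever $\kappa_i(Y)\leq0$. This is again part of the Kashiwara--Saito rule. An induction on $s$ then gives $\widetilde E_i^{\,s-\lambda}(\widetilde F_i^*)^s X=\widetilde E_i^{\,s-1-\lambda}(\widetilde F_i^*)^{s-1}X$.
\end{itemize}

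The main obstacle is the boundary bookkeeping, because the commutation rule flips exactly at $\kappa_i=0$. In the inductive step we must check that $\widetilde E_i$ is applied, after the first $s-\lambda-1$ copies, to an element with $\kappa_i\leq 0$. Those first copies do not change $\varepsilon_i^*$ and raise $\langle h_i,\wt\rangle$, so we must track how $\kappa_i$ moves under $\widetilde E_i$. We would do this by computing directly in the $\mathfrak{sl}_2\times\mathfrak{sl}_2$ model. There $b$ corresponds to a pair of string lengths, and all quantities are explicit piecewise-linear functions. The claim then reduces to a routine case check on the signs of $\kappa_i$.
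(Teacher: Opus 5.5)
Your two bullets are the paper's proof. With $Y_t=(\widetilde F_i^*)^tX$ you get $\varepsilon_i(Y_t)=e$ and $\kappa_i(Y_t)=\varphi_i(X)-t$ for $t\le\varphi_i(X)$. You then move $\widetilde E_i^{\,e}$ past $(\widetilde F_i^*)^{\varphi_i(X)}$, which is allowed because $\kappa_i>0$ along the way and only grows under $\widetilde E_i$. Finally you use $\widetilde F_i^*=\widetilde F_i$ at $\kappa_i=0$. The paper does this last part in one step, writing $(\widetilde F_i^*)^sX=\widetilde F_i^{\,s-\varphi_i(X)}Y_{\varphi_i(X)}$, rather than by induction on $s$.

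You should say why every later application of $\widetilde F_i^*$ still happens at $\kappa_i=0$. At such a point $\widetilde F_i^*=\widetilde F_i$ raises $\varepsilon_i$ and $\varepsilon_i^*$ by one and lowers $\langle h_i,\wt\rangle$ by two, so $\kappa_i$ does not change.

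Your final paragraph misplaces the difficulty, and the check it proposes would fail. In $\widetilde E_i^{\,s-\lambda}(\widetilde F_i^*)^sX$, the first copy of $\widetilde E_i$ to act is the one next to the last $\widetilde F_i^*$. It acts on $\widetilde F_i^*Y_{s-1}$, and $\kappa_i(Y_{s-1})=0$. Hence $\widetilde E_i\widetilde F_i^*Y_{s-1}=Y_{s-1}$, and the remaining $\widetilde E_i^{\,s-1-\lambda}$ give the case $s-1$ verbatim. So you do not need to track $\kappa_i$ under $\widetilde E_i$, and no $\mathfrak{sl}_2\times\mathfrak{sl}_2$ case analysis is required.

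The condition you say must be checked is false in general. After $s-\lambda-1$ copies of $\widetilde E_i$ you are at $\widetilde E_i^{\,e-1}Y_{\varphi_i(X)}$ (when $e\ge1$). One has $\kappa_i(\widetilde E_i^{\,a}Y_{\varphi_i(X)})=a$ for $0\le a\le e$, which is positive once $e\ge2$. So the last copy of $\widetilde E_i$ does not act at $\kappa_i\le0$, and that verification would get stuck even though the lemma is true.

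Your statement of the Kashiwara--Saito rule is also too strong: $\widetilde F_i$ commutes with $\widetilde F_i^*$ only when $\kappa_i\ge2$. For example, take type $A_2$, $i=1$, and $b=L(2)$, so $\kappa_1(b)=1$. Then $\widetilde F_1\widetilde F_1^*b$ has $\varepsilon_1=1$. But $\widetilde F_1b$ has $\kappa_1=0$, so $\widetilde F_1^*\widetilde F_1b=\widetilde F_1^{\,2}b$ has $\varepsilon_1=2$. You only use the commutation of $\widetilde E_i$ with $\widetilde F_i^*$, which does hold for $\kappa_i>0$, so this slip does not affect your argument.
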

\begin{proof}
We work with crystal classes. Set
\[
 m=\varepsilon_i(X),\qquad
 d=\langle h_i,\wt(X)\rangle,\qquad
 p=\varphi_i(X)=m+d.
\]
For a crystal element $Y$, define
\[
 \kappa_i(Y)
 :=\varepsilon_i(Y)+\varepsilon_i^*(Y)
       +\langle h_i,\wt(Y)\rangle.
\]
The bicrystal relations give $\kappa_i(Y)\geq0$; see
\cite[Proposition~1.4]{tingley2016mv}, translated from the
$B(-\infty)$ convention there to our $B(\infty)$ convention.
Since $\varepsilon_i^*(X)=0$, we have
$\kappa_i(X)=p\geq0$.

We recall the relations needed below.
If $\kappa_i(Y)>0$, then
\[
 \varepsilon_i(\widetilde F_i^*Y)=\varepsilon_i(Y),
 \qquad
 \kappa_i(\widetilde F_i^*Y)=\kappa_i(Y)-1.
\]
If $\kappa_i(Y)>0$ and $\widetilde E_iY\neq0$, then
\[
 \widetilde E_i\widetilde F_i^*Y
 =\widetilde F_i^*\widetilde E_iY,
 \qquad
 \kappa_i(\widetilde E_iY)=\kappa_i(Y)+1.
\]
If $\kappa_i(Y)=0$, then
$\widetilde F_i^*Y=\widetilde F_iY$, and the resulting
element still has $\kappa_i=0$.

Put $Y_t=(\widetilde F_i^*)^tX$.
Induction on $t$ gives
\[
 \varepsilon_i(Y_t)=m,\qquad
 \kappa_i(Y_t)=p-t
 \qquad(0\leq t\leq p).
\]
For $t<p$, the value $\kappa_i(Y_t)$ is positive and
increases under each nonzero application of $\widetilde E_i$.
Thus the preceding commutation relation can be applied
repeatedly, yielding
\[
 \widetilde E_i^{\,m}Y_p
 =\widetilde E_i^{\,m}(\widetilde F_i^*)^pX
 =(\widetilde F_i^*)^p\widetilde E_i^{\,m}X
 =T_iX.
\]

Since $\kappa_i(Y_p)=0$, all subsequent applications of
$\widetilde F_i^*$ agree with those of $\widetilde F_i$.
Consequently, for every $s\geq p$,
\[
 (\widetilde F_i^*)^sX
 =\widetilde F_i^{\,s-p}Y_p.
\]
Finally, $s-d=m+s-p\geq0$, so the inverse-string identity
$\widetilde E_i^{\,a}\widetilde F_i^{\,a}Y=Y$ gives
\[
 \begin{aligned}
 \widetilde E_i^{\,s-d}(\widetilde F_i^*)^sX
 &=\widetilde E_i^{\,m+s-p}
       \widetilde F_i^{\,s-p}Y_p\\
 &=\widetilde E_i^{\,m}Y_p\\
 &=T_iX.
 \end{aligned}
\]
This proves~\eqref{eq:saito-stabilized}.
\end{proof}

\subsection{Determinantial modules}

For $\Lambda\in P_+$ and $y\leq x$, the determinantial module
$M(x\Lambda,y\Lambda)$ is the self-dual simple module whose
Grothendieck class is the unipotent quantum minor
$D(x\Lambda,y\Lambda)$. It is real, belongs to $\mathscr C_x$,
and has weight $x\Lambda-y\Lambda$; see
\cite[Propositions~4.1--4.2 and Theorem~4.5]{kkop2018strata}.
For $M=M(x\Lambda,y\Lambda)$, the restriction-support property
\cite[Lemma~4.4]{kkop2018strata} states that
\begin{equation}\label{eq:determinantial-support}
 \begin{split}
 \gamma\in\mathcal W(M)&\Longrightarrow
           x\Lambda+\gamma\in\wt V(\Lambda),\\
 \gamma\in\mathcal W^*(M)&\Longrightarrow
           y\Lambda-\gamma\in\wt V(\Lambda),
 \end{split}
\end{equation}
where $\wt V(\Lambda)$ denotes the weight set of the integrable
highest-weight module $V(\Lambda)$.

The following extremal-string identities follow from
\cite[Lemmas~1.4 and~1.7]{kkop2018strata}, together with the
identification of quantum minors with determinantial modules.
If $s_i y>y$ and $s_i y\leq x$, then, with
$n=\langle h_i,y\Lambda\rangle\geq0$,
\begin{equation}\label{eq:star-extremal-string}
 \varepsilon_i^*(M)=n,\qquad
 (\widetilde E_i^*)^nM\simeq M(x\Lambda,s_i y\Lambda).
\end{equation}
If $s_i x<x$ and $y\leq s_i x$, then, with
$a=-\langle h_i,x\Lambda\rangle\geq0$,
\begin{equation}\label{eq:unstarred-extremal-string}
 \varepsilon_i(M)=a,\qquad
 \widetilde E_i^{\,a}M\simeq M(s_i x\Lambda,y\Lambda).
\end{equation}
Isomorphisms involving crystal operators are understood up to
grading shift.

\subsection{Lusztig parameters}

Fix a reduced expression $\bw=(i_1,\ldots,i_r)$ of $w$, and set
$\rho_k=s_{i_1}\cdots s_{i_{k-1}}(\alpha_{i_k})$.
The PBW parametrization assigns to every simple module
$X\in\mathscr C_w$, modulo grading shift, a unique vector
$\ba^{\bw}(X)\in\ZZ_{\geq0}^r$, and gives a bijection between
these simple classes and $\ZZ_{\geq0}^r$.
If $r>0$, set $i=i_1$. The Saito--PBW construction gives
\[
 T_i(\widetilde E_i^*)^{\varepsilon_i^*(X)}X
 \in\mathscr C_{s_iw}.
\]
Thus the parameter for the shorter reduced word is defined,
and our convention is characterized by the recursion
\begin{equation}\label{eq:lusztig-saito-recursion}
 \begin{split}
 a_1&=\varepsilon_{i_1}^*(X),\\
 (a_2,\ldots,a_r)
 &=\ba^{(i_2,\ldots,i_r)}
       \left(T_{i_1}(\widetilde E_{i_1}^*)^{a_1}X\right),
 \qquad \ba^{\bw}(X)=(a_1,\ldots,a_r).
 \end{split}
\end{equation}
The empty word parametrizes the tensor unit. In this convention,
$\wt(X)=-\sum_{k=1}^r a_k\rho_k$.
For $w_{\leq k}=s_{i_1}\cdots s_{i_k}$, the inclusion
$\mathscr C_{w_{\leq k}}\subseteq\mathscr C_w$ extends its
PBW parameters by zeros in positions $k+1,\ldots,r$.
Moreover, if $X,Y\in\mathscr C_w$ are simple and $X\circ Y$ is
simple, then
\begin{equation}\label{eq:pbw-simple-product}
 \ba^{\bw}(X\circ Y)=\ba^{\bw}(X)+\ba^{\bw}(Y).
\end{equation}
The last identity follows from the leading term of the dual PBW
expansion. These properties follow from the PBW and crystal
formalism in \cite[Section~4]{kimura2012quantum}, together with
the cuspidal description of $\mathscr C_w$ in
\cite[Section~2]{kkop2018strata}.

\section{Lusztig parameters of determinantial modules}\label{sec:parameters}

\subsection{Leftmost subexpressions}

Fix $v\leq w$ and a reduced expression
$\bw=(i_1,\ldots,i_r)$ of $w$. The \emph{leftmost reduced
subexpression} of $v$ in $\bw$ is obtained as follows. Starting with
$z_0=v$, define recursively
\[
 z_e=
 \begin{cases}
   s_{i_e}z_{e-1},&s_{i_e}z_{e-1}<z_{e-1},\\
   z_{e-1},&s_{i_e}z_{e-1}>z_{e-1}.
 \end{cases}
\]
Select position $e$ in the first case. The selected letters form a
reduced expression $\bv$ of $v$, and $z_r=1$.
Let $J_{\bw,v}$ be the set of unselected positions.
For $0\leq k\leq r$, put
\[
 \bw_{\leq k}=(i_1,\ldots,i_k),\qquad
 w_{\leq k}=s_{i_1}\cdots s_{i_k},\qquad
 v_{\leq k}=\prod_{\substack{1\leq d\leq k\\d\notin J_{\bw,v}}}s_{i_d}.
\]
All products indexed by positions are taken in increasing order;
empty products are the identity. In particular, $w_{\leq0}=v_{\leq0}=1$.
The selected positions in $[1,k]$ form the leftmost reduced
subexpression of $v_{\leq k}$ in $\bw_{\leq k}$.

For $1\leq e\leq k\leq r$, define the selected suffix
\begin{equation}\label{eq:selected-suffix}
 \sigma_{e,k}
 :=\prod_{\substack{e<d\leq k\\d\notin J_{\bw,v}}}s_{i_d}.
\end{equation}
For $\Lambda\in P_+$, define $\bdelta_{\bw,v}(\Lambda)\in\ZZ^r$ by
\begin{equation}\label{eq:delta-definition}
 \bigl(\bdelta_{\bw,v}(\Lambda)\bigr)_e
 =
 \begin{cases}
  \bigl\langle\sigma_{e,r}^{-1}(\alpha_{i_e}^\vee),\Lambda\bigr\rangle,
      &e\in J_{\bw,v},\\
  0,&e\notin J_{\bw,v}.
 \end{cases}
\end{equation}
We abbreviate $\bdelta_{\bw,v}(\varpi_j)$ to
$\bdelta_{\bw,v}^{\,j}$.
If $e\in J_{\bw,v}$, the greedy construction gives
$s_{i_e}\sigma_{e,r}>\sigma_{e,r}$; hence the coroot in
\eqref{eq:delta-definition} is positive. Thus
$\bdelta_{\bw,v}(\Lambda)\in\ZZ_{\geq0}^r$.

\subsection{Simultaneous descent}

\begin{lemma}\label{lem:determinantial-simultaneous-descent}
Let $\Lambda\in P_+$ and $y\leq x$. If
$s_i x<x$ and $s_i y<y$, then
\[
 \varepsilon_i^*\bigl(M(x\Lambda,y\Lambda)\bigr)=0,
 \qquad
 T_iM(x\Lambda,y\Lambda)
 \simeq M(s_i x\Lambda,s_i y\Lambda).
\]
\end{lemma}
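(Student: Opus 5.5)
We split the statement into its two assertions and treat the vanishing first. Put $M=M(x\Lambda,y\Lambda)$. If $n\alpha_i\in\mathcal W^*(M)$ with $n>0$, then by \eqref{eq:determinantial-support} the weight $y\Lambda-n\alpha_i$ lies in $\wt V(\Lambda)$. Because $s_iy<y$, we have $\langle h_i,y\Lambda\rangle\leq0$. The extremal weight $y\Lambda$ therefore sits at the top of its $i$-string: the $i$-string through an extremal weight is $\{y\Lambda,\,y\Lambda+\alpha_i,\dots\}$ when the pairing is $\leq0$. Equivalently, $s_i(y\Lambda-n\alpha_i)=s_iy\Lambda+n\alpha_i$ would have to be a weight exceeding the extremal weight $s_iy\Lambda$ in the $\alpha_i$-direction. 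The clean way to say this is that $y^{-1}\alpha_i$ is a negative root, so $\langle y^{-1}h_i,\Lambda\rangle\leq0$ and $y\Lambda+\ZZ_{>0}(-\alpha_i)$ avoids $\wt V(\Lambda)$. This gives $\varepsilon_i^*(M)=0$.

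For the Saito reflection we compute $T_iM$ directly from the definition, not from the stabilized form. Since $s_i(s_ix)=x>s_ix$ and $s_iy<y\leq x$, the unstarred extremal-string identity \eqref{eq:unstarred-extremal-string} applies to the pair $(x,y)$, once we know $y\leq s_ix$. That comparison follows from the lifting property of Bruhat order: $s_iy<y\leq x$ and $s_ix<x$ imply $s_iy\leq s_ix$. However, the step actually needs $y\leq s_ix$, which can fail (take $y=x$). So we proceed differently. Put $a=-\langle h_i,x\Lambda\rangle=\varepsilon_i(M)$, obtained from \eqref{eq:unstarred-extremal-string} only when legitimate. The key observation is that the weight of $T_iM$ is $s_i\wt(M)=s_ix\Lambda-s_iy\Lambda$, which matches the target.

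Next we identify $T_iM$ via the starred string. Let $N=M(s_ix\Lambda,s_iy\Lambda)$; note $s_iy\leq s_ix$ by lifting. Apply \eqref{eq:star-extremal-string} to the pair $(x,s_iy)$: here $s_i(s_iy)=y>s_iy$ and $y\leq x$, with $n=\langle h_i,s_iy\Lambda\rangle\geq0$. This gives $(\widetilde E_i^*)^nM(x\Lambda,s_iy\Lambda)\simeq M$. In the other direction, \eqref{eq:unstarred-extremal-string} for the pair $(x,s_iy)$, valid since $s_iy\leq s_ix$, gives $\widetilde E_i^{a}M(x\Lambda,s_iy\Lambda)\simeq N$ with $a=-\langle h_i,x\Lambda\rangle$. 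Thus $M=(\widetilde F_i^*)^{-n}\cdots$; more precisely, $M(x\Lambda,s_iy\Lambda)=(\widetilde F_i^*)^nM=\widetilde F_i^{\,a}N$.

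We now check the Saito recipe on $N$. We have $\varepsilon_i(N)=0$, because $s_i(s_ix)=x>s_ix$ makes $s_ix\Lambda$ top of its string, by the same weight argument as in the first step. Then $T_i^{-1}$ in the form $\widetilde F_i^{\,\cdot}$ composed with $(\widetilde E_i^*)^{\cdot}$ should send $N$ to $M$. The cleanest route is to apply Lemma~\ref{lem:saito-stabilized} to $M$ with $s=n$. We have $\varepsilon_i^*(M)=0$ and $\langle h_i,\wt M\rangle=\langle h_i,x\Lambda\rangle-\langle h_i,y\Lambda\rangle=-a+n$. This yields $T_iM=\widetilde E_i^{\,n-(n-a)}(\widetilde F_i^*)^nM=\widetilde E_i^{\,a}M(x\Lambda,s_iy\Lambda)=N$.

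The main obstacle is the hypothesis $s\geq\varphi_i(M)$ in Lemma~\ref{lem:saito-stabilized}. By the definition of $\varphi_i$, this amounts to $\varepsilon_i(M)\leq a$. We will obtain it from \eqref{eq:determinantial-support}: if $m\alpha_i\in\mathcal W(M)$, then $x\Lambda+m\alpha_i\in\wt V(\Lambda)$, and since $x\Lambda$ is extremal with $\langle h_i,x\Lambda\rangle=-a$, its $i$-string ends at $x\Lambda+a\alpha_i$, forcing $m\leq a$.
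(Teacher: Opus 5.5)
Your final argument is correct and is essentially the paper's proof. It shows $\varepsilon_i^*(M)=0$ and $\varepsilon_i(M)\leq a$ from the support property~\eqref{eq:determinantial-support}, applies Lemma~\ref{lem:saito-stabilized} with $s=n=-\langle h_i,y\Lambda\rangle$, and then combines the starred and unstarred extremal strings for the pair $(x,s_iy)$, using the lifting property $s_iy\leq s_ix$. When you write it up, remove the abandoned detours, in particular the asserted equality $a=\varepsilon_i(M)$: it fails in general (e.g.\ $y=x$ with $a>0$), and the argument never needs it, since only $\varepsilon_i(M)\leq a$ is used.
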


\begin{proof}
Set $\lambda=x\Lambda$, $\mu=y\Lambda$, and $X=M(\lambda,\mu)$.
The descent assumptions imply
\[
 a=-\langle h_i,\lambda\rangle\geq0,
 \qquad b=-\langle h_i,\mu\rangle\geq0.
\]
If $\varepsilon_i^*(X)>0$, the restriction-support property~\eqref{eq:determinantial-support} implies
that $\mu-\alpha_i$ is a weight of
the integrable highest-weight module $V(\Lambda)$.
By Weyl invariance of its weight set, $\Lambda-y^{-1}\alpha_i$
would then also be a weight. But $y^{-1}\alpha_i$ is a negative
root, contradicting the highest-weight property of $\Lambda$.
Consequently, $\varepsilon_i^*(X)=0$.

Similarly, $\widetilde E_i^kX\neq0$ implies that
$\lambda+k\alpha_i$ is a weight of $V(\Lambda)$.
The $i$-string through the extremal weight $\lambda$ ends at
$s_i\lambda=\lambda+a\alpha_i$, so $\varepsilon_i(X)\leq a$.
It follows that
\[
 \varphi_i(X)=\varepsilon_i(X)+b-a\leq b.
\]
We may therefore apply~\eqref{eq:saito-stabilized} with $s=b$ to obtain
\[
 T_iX=\widetilde E_i^{\,a}(\widetilde F_i^*)^bM(\lambda,\mu).
\]

The lifting property of Bruhat order gives $s_i y\leq s_i x$.
Moreover, $s_i y\leq y\leq x$, so all determinantial modules in
the following applications of~\eqref{eq:star-extremal-string}
and~\eqref{eq:unstarred-extremal-string} are defined:
\[
 (\widetilde E_i^*)^bM(\lambda,s_i\mu)
      \simeq M(\lambda,\mu),\qquad
 \widetilde E_i^{\,a}M(\lambda,s_i\mu)
      \simeq M(s_i\lambda,s_i\mu).
\]
Inverting the first identity along its crystal string yields
$(\widetilde F_i^*)^bM(\lambda,\mu)\simeq M(\lambda,s_i\mu)$.
Substitution proves the assertion.
\end{proof}

\subsection{The parameter formula}

\begin{theorem}\label{thm:lusztig-determinantial}
Let $\Lambda\in P_+$, $v\leq w$, and let $\bw$ be a reduced
expression of $w$. Then
\[
 \ba^{\bw}\bigl(M(w\Lambda,v\Lambda)\bigr)
       =\bdelta_{\bw,v}(\Lambda).
\]
In particular,
$\ba^{\bw}\bigl(M(w\varpi_j,v\varpi_j)\bigr)
=\bdelta_{\bw,v}^{\,j}$ for every $j\in I$.
\end{theorem}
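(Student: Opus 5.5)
Induction on $r=\ell(w)$, following the recursion \eqref{eq:lusztig-saito-recursion}. For $r=0$ we have $w=v=1$, so $M(\Lambda,\Lambda)$ is the tensor unit and both sides are empty vectors. For $r>0$, set $i=i_1$, $\bw'=(i_2,\ldots,i_r)$, $w'=s_iw$, and $X=M(w\Lambda,v\Lambda)$. We have $s_iw<w$. The argument splits according to whether position $1$ is selected in the leftmost subexpression, that is, whether $s_iv<v$.

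\emph{Case 1: position $1$ is selected} ($s_iv<v$). Lemma~\ref{lem:determinantial-simultaneous-descent} gives $\varepsilon_i^*(X)=0$, so $a_1=0=(\bdelta_{\bw,v}(\Lambda))_1$. It also gives $T_iX\simeq M(w'\Lambda,s_iv\Lambda)$. By the lifting property, $s_iv\le w'$, so the induction hypothesis applies to $(w',s_iv,\bw')$. The greedy construction for $s_iv$ in $\bw'$ starts from $z_1=s_iv$, hence it is exactly the tail of the construction for $v$ in $\bw$. The unselected positions therefore correspond under the shift $e\mapsto e-1$. For $e\ge2$, the selected suffixes $\sigma_{e,r}$ are literally the same products, since they only involve positions $d>e\ge2$. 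So $\bdelta_{\bw',s_iv}(\Lambda)$ equals the tail $(\bdelta_{\bw,v}(\Lambda))_{2..r}$, which completes this case.

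\emph{Case 2: position $1$ is unselected} ($s_iv>v$). Here $\sigma_{1,r}=v$, so the claimed first coordinate is $\langle v^{-1}h_i,\Lambda\rangle=\langle h_i,v\Lambda\rangle=n$. Since $s_iw<w$ and $v\le w$, lifting gives $s_iv\le w$. Now \eqref{eq:star-extremal-string} yields $a_1=\varepsilon_i^*(X)=n$ and $(\widetilde E_i^*)^nX\simeq M(w\Lambda,s_iv\Lambda)$. The pair $(w,s_iv)$ satisfies simultaneous descent, so Lemma~\ref{lem:determinantial-simultaneous-descent} gives $T_iM(w\Lambda,s_iv\Lambda)\simeq M(w'\Lambda,v\Lambda)$, using $s_is_iv=v$. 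We need $v\le w'$. This follows by lifting applied to $s_iv\le w$, together with $s_i(s_iv)<s_iv$ and $s_iw<w$.

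The main obstacle is the combinatorial matching in Case 2. One must check that the greedy subexpression of $v$ in $\bw'$ is the tail of that of $v$ in $\bw$, which holds because $z_1=z_0=v$. One must also check that the suffixes $\sigma_{e,r}$ for $e\ge2$ coincide. Both hold because the selection at positions $\ge2$ depends only on $z_1$. Then $\bdelta_{\bw',v}(\Lambda)$ is the tail of $\bdelta_{\bw,v}(\Lambda)$, and induction finishes the proof. The fundamental-weight statement is the special case $\Lambda=\varpi_j$.

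Two bookkeeping points need care. First, the isomorphisms above hold only up to grading shift, but the parameters ignore shifts. Second, the recursion applies $T_{i_1}$ after removing the $\widetilde E_{i_1}^*$-string, which is exactly the order used in Case 2.
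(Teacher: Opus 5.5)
Your proposal is correct and matches the paper's proof: induction on $\ell(w)$ through the Saito recursion. The selected case is handled by the simultaneous-descent lemma, and the unselected case by the starred extremal string followed by simultaneous descent for the pair $(w,s_iv)$. Your explicit lifting-property checks and the tail compatibility of the greedy subexpression and of the suffixes $\sigma_{e,r}$ spell out what the paper compresses into its closing sentence.
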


\begin{proof}
We argue by induction on $r=\ell(w)$. For $r=0$, we have $v=w=1$,
and the assertion is immediate. Suppose $r>0$, and set
\[
 i=i_1,\qquad w'=s_iw,\qquad \bw'=(i_2,\ldots,i_r),\qquad
 X=M(w\Lambda,v\Lambda).
\]
Write $\ba^{\bw}(X)=(a_1,\ldots,a_r)$; in particular,
$a_1=\varepsilon_i^*(X)$ by~\eqref{eq:lusztig-saito-recursion}.

If $1\notin J_{\bw,v}$, then $s_iv<v$. Removing the first selected
letter leaves the leftmost reduced subexpression of $v'=s_iv$ in
$\bw'$. Lemma~\ref{lem:determinantial-simultaneous-descent} gives
\[
 a_1=0,\qquad T_iX\simeq M(w'\Lambda,v'\Lambda).
\]
The induction hypothesis and \eqref{eq:lusztig-saito-recursion}
therefore determine the remaining coordinates.

If $1\in J_{\bw,v}$, then $s_iv>v$, and the selected subexpression
lies entirely in $\bw'$. Thus $v\leq w'$. Since $s_iw<w$, the
lifting property also gives $s_iv\leq w$.
Put $n=\langle h_i,v\Lambda\rangle\geq0$.
The starred extremal-string identity~\eqref{eq:star-extremal-string} gives
\[
 a_1=\varepsilon_i^*(X)=n,\qquad
 (\widetilde E_i^*)^nX\simeq M(w\Lambda,s_iv\Lambda).
\]
Applying Lemma~\ref{lem:determinantial-simultaneous-descent} to
the pair $(w,s_iv)$ yields
\[
 T_i(\widetilde E_i^*)^nX\simeq M(w'\Lambda,v\Lambda).
\]
Here $\sigma_{1,r}=v$, so
\[
 a_1=\langle h_i,v\Lambda\rangle
 =\bigl\langle\sigma_{1,r}^{-1}(\alpha_i^\vee),\Lambda\bigr\rangle.
\]
Again, the remaining coordinates follow from the induction
hypothesis and \eqref{eq:lusztig-saito-recursion}.

In either case, deleting the first position leaves the selected
suffix after every later position unchanged. The resulting
coordinates are therefore precisely those in
\eqref{eq:delta-definition}.
\end{proof}

\begin{remark}[Compatibility with weights]\label{rem:parameter-weight}
The formula satisfies the weight identity
\[
 \sum_{e=1}^r
   \bigl(\bdelta_{\bw,v}(\Lambda)\bigr)_e\rho_e
 =v\Lambda-w\Lambda.
\]
This can also be checked without crystal operators.
Set $\sigma_{0,r}=v$ and
$\mu_e=w_{\leq e}\sigma_{e,r}\Lambda$ for $0\leq e\leq r$.
At a selected position, $\mu_{e-1}=\mu_e$; at an unselected
position, their difference is
$\langle\alpha_{i_e}^\vee,\sigma_{e,r}\Lambda\rangle\rho_e$.
Summing these differences gives the identity.
\end{remark}

\begin{example}[A pair outside the length-additive case]\label{ex:A2-parameters}
In type $A_2$, take
\[
 \bw=(1,2),\qquad w=s_1s_2,\qquad v=s_2.
\]
Then $v\leq w$, but
$\ell(v^{-1}w)=3\neq\ell(w)-\ell(v)=1$.
The selected position is $2$, so $J_{\bw,v}=\{1\}$ and
$\sigma_{1,2}=s_2$.
For $\Lambda=a\varpi_1+b\varpi_2$, with $a,b\geq0$, we obtain
\[
 s_2(\alpha_1^\vee)=\alpha_1^\vee+\alpha_2^\vee,
 \qquad
 \bdelta_{\bw,v}(\Lambda)=(a+b,0).
\]
Thus Theorem~\ref{thm:lusztig-determinantial} gives
\[
 M(s_1s_2\Lambda,s_2\Lambda)
 \simeq L(1)^{\odot(a+b)},
\]
where $L(1)$ is the self-dual simple module of weight $-\alpha_1$.
Here the selected suffix contributes to the parameter even though
its own coordinate is zero.
\end{example}

For later use, let $\iota_k:\ZZ^k\hookrightarrow\ZZ^r$ denote
extension by zeros in the last $r-k$ positions, and set
\begin{equation}\label{eq:prefix-delta}
 \bdelta_k(\Lambda)
 :=\iota_k\bigl(\bdelta_{\bw_{\leq k},v_{\leq k}}(\Lambda)\bigr),
 \qquad
 \bdelta_{k,j}:=\bdelta_k(\varpi_j).
\end{equation}
Explicitly,
\begin{equation}\label{eq:prefix-delta-coordinates}
 (\bdelta_{k,j})_e
 =\begin{cases}
 \bigl\langle\sigma_{e,k}^{-1}(\alpha_{i_e}^\vee),\varpi_j\bigr\rangle,
       &e\in J_{\bw,v},\ e\leq k,\\
 0,&\text{otherwise}.
 \end{cases}
\end{equation}
Compatibility of PBW parametrizations with the inclusion
$\mathscr C_{w_{\leq k}}\subseteq\mathscr C_w$ gives the following consequence.

\begin{corollary}\label{cor:prefix-parameters}
For $1\leq k\leq r$ and $\Lambda\in P_+$,
\[
 \ba^{\bw}\bigl(M(w_{\leq k}\Lambda,v_{\leq k}\Lambda)\bigr)
 =\bdelta_k(\Lambda).
\]
In particular, the parameter for $\Lambda=\varpi_j$ is
$\bdelta_{k,j}$.
\end{corollary}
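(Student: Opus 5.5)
The plan is to apply Theorem~\ref{thm:lusztig-determinantial} to the prefix pair and then push the resulting parameter into $\ZZ^r$ using the compatibility of PBW parametrizations with $\mathscr C_{w_{\leq k}}\subseteq\mathscr C_w$.

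First, check that the prefix data satisfy the theorem's hypotheses. The word $\bw_{\leq k}=(i_1,\ldots,i_k)$ is a reduced expression of $w_{\leq k}$, since it is a prefix of a reduced word. The selected positions in $[1,k]$ form a reduced subexpression of $\bw_{\leq k}$, namely the leftmost reduced subexpression of $v_{\leq k}$, as noted in the subsection on leftmost subexpressions. Therefore $v_{\leq k}\leq w_{\leq k}$ by the subword property, and $M(w_{\leq k}\Lambda,v_{\leq k}\Lambda)$ is defined and lies in $\mathscr C_{w_{\leq k}}$.

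The only point needing care is that the leftmost construction run on $\bw_{\leq k}$ with target $v_{\leq k}$ selects exactly the positions $[1,k]\setminus J_{\bw,v}$. That is the sense in which the statement in the paper is meant. If needed, verify it directly. Set $z'_0=v_{\leq k}$, and show by induction on $e\le k$ that the greedy step at position $e$ agrees with the one for $v$ in $\bw$, with $z'_e=\sigma_{e,k}$ and $z_e=\sigma_{e,r}$. When $e$ is selected for $v$, the letter $s_{i_e}$ is the first letter of the reduced word $\sigma_{e-1,k}$, so it is a left descent. When $e$ is unselected, one needs $s_{i_e}\sigma_{e,k}>\sigma_{e,k}$. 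This follows because $\sigma_{e,k}$ is a reduced prefix of $\sigma_{e,r}$ and $s_{i_e}\sigma_{e,r}>\sigma_{e,r}$. Indeed, if $s_{i_e}$ were a left descent of the prefix $\sigma_{e,k}$, it would also be a left descent of $\sigma_{e,r}$, because $\ell(\sigma_{e,r})=\ell(\sigma_{e,k})+\ell(\sigma_{k,r})$. This length additivity is the main subtlety, and it holds because the selected letters form a reduced word for $v$.

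With this in hand, Theorem~\ref{thm:lusztig-determinantial} gives
$\ba^{\bw_{\leq k}}(M(w_{\leq k}\Lambda,v_{\leq k}\Lambda))=\bdelta_{\bw_{\leq k},v_{\leq k}}(\Lambda)$. Its coordinates are $\langle\sigma_{e,k}^{-1}(\alpha_{i_e}^\vee),\Lambda\rangle$ for unselected $e\le k$ and $0$ otherwise, as in \eqref{eq:prefix-delta-coordinates}. Finally, the PBW parameters of $\mathscr C_{w_{\leq k}}$ extend by zeros to those of $\mathscr C_w$, so applying $\iota_k$ yields $\bdelta_k(\Lambda)$. Specializing to $\Lambda=\varpi_j$ gives $\bdelta_{k,j}$.
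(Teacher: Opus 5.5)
Your proposal is correct and matches the paper's proof, which simply applies Theorem~\ref{thm:lusztig-determinantial} to the pair $(\bw_{\leq k},v_{\leq k})$ and extends the resulting vector by zeros via $\iota_k$. Your extra check is also valid: the greedy construction on $\bw_{\leq k}$ with target $v_{\leq k}$ selects exactly $[1,k]\setminus J_{\bw,v}$, with $z'_e=\sigma_{e,k}$, by the length additivity $\ell(\sigma_{e,r})=\ell(\sigma_{e,k})+\ell(\sigma_{k,r})$. The paper only asserts this fact, and it is what makes the explicit coordinates in \eqref{eq:prefix-delta-coordinates} agree with the definition \eqref{eq:prefix-delta}.
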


\begin{proof}
Apply Theorem~\ref{thm:lusztig-determinantial} to
$(\bw_{\leq k},v_{\leq k})$ and extend its parameter vector by zeros.
\end{proof}

\section{Demazure weaves in finite simply-laced type}\label{sec:weaves}

Throughout this section, assume that $\fg$ is of finite type
$A$, $D$, or $E$, and let $w_0$ be the longest element
of $W$. Keep $v\leq w$ and $\bw=(i_1,\ldots,i_r)$ fixed.

\subsection{Demazure products and Lusztig cycles}

For $x\in W$ and $i\in I$, define the Demazure product by
\[
 x*s_i=
 \begin{cases}
 xs_i,&\ell(xs_i)=\ell(x)+1,\\
 x,&\ell(xs_i)=\ell(x)-1.
 \end{cases}
\]
For a word $\beta=(b_1,\ldots,b_t)$, put
$\delta(\beta)=s_{b_1}*\cdots*s_{b_t}$.
A \emph{Demazure weave} records braid moves and contractions
$ii\to i$ that reduce $\beta$ to a reduced word for $\delta(\beta)$.
We draw the word at the top and perform these moves downwards.
In simply-laced type, commutations and three-term braid moves
give four- and six-valent vertices, respectively; contractions
give trivalent vertices.

The \emph{right inductive weave}
$\overrightarrow{\mathfrak W}(\beta)$ processes the letters from
left to right. At each step, the new letter is inserted on the
right of a reduced representative of the preceding Demazure
product. An ascent extends this representative. At a descent,
braid moves bring the repeated letter to its right end, where
the contraction $ii\to i$ creates one trivalent vertex.
We fix such braid moves throughout; see
\cite[Section~4]{casals2025cluster} and
\cite[Section~5.1]{bi2026monoidal}.

The distinguished \emph{Lusztig cycle} $\gamma_u$ of a trivalent
vertex $u$ assigns nonnegative integer weights to the edges.
It vanishes above $u$, has weight $1$ on the outgoing edge of $u$,
and propagates downwards by the tropical Lusztig rules.
A commutation swaps the two weights, and a three-term braid move
acts by
\[
 (a,b,c)\longmapsto(b+c-p,p,a+b-p),
 \qquad p=\min\{a,c\},
\]
with the weights read from left to right.
At every subsequent trivalent vertex, the outgoing weight is the
minimum of the two incoming weights. The initialization at $u$
is the only exception to this last rule.
These rules are the cycle conventions of
\cite[Section~5.2]{bi2026monoidal}.

\subsection{Weave vectors and cluster-variable modules}

Choose a reduced word $\mathbf h$ for $w_0v^{-1}$,
and put
\[
 \beta=\mathbf h\bw,\qquad
 \mathfrak W_{\bw,v}=\overrightarrow{\mathfrak W}(\beta),\qquad
 V_{\bw,v}=(\mathfrak W_{\bw,v})_3.
\]
Here the arrow denotes the right inductive Demazure weave, and the
subscript $3$ denotes its set of trivalent vertices.
The Demazure product of $\beta$ is $w_0$.
The letters of $\mathbf h$ introduce no trivalent vertices, and the
vertices created in the $\bw$-block correspond exactly to
$J_{\bw,v}$. Indeed, for $0\leq k\leq r$,
\begin{equation}\label{eq:prefix-demazure}
 \delta\bigl(\mathbf h\bw_{\leq k}\bigr)
       =w_0v^{-1}v_{\leq k}.
\end{equation}
To see this, write $t_k=v_{\leq k}^{-1}v$. The greedy rule selects $k$
exactly when $s_{i_k}t_{k-1}<t_{k-1}$. Since
$w_0v^{-1}v_{\leq k-1}=w_0t_{k-1}^{-1}$,
length reversal by the longest element identifies this condition
with an ascent after right multiplication by $s_{i_k}$.
This proves \eqref{eq:prefix-demazure} by the Demazure recursion.
Unselected positions are therefore exactly the trivalent steps.
We write
\[
 V_{\bw,v}\xrightarrow{\sim}J_{\bw,v},\qquad u\longmapsto q_u,
\]
and order the vertices by $u'<u$ if $q_{u'}<q_u$.

We first specify the modules whose Lusztig parameters we wish
to compute. By
\cite[Theorem~10.1 and Section~10.1]{casals2025cluster},
the seed associated with
$\overrightarrow{\mathfrak W}(\mathbf h\bw)$
is identified with M\'enard's seed $\mathbf s(v,\bw)$.
Here we use the right-inductive, leftmost-subexpression
convention, obtained from the convention in that reference
by reversing words and replacing $(v,w)$ with
$(v^{-1},w^{-1})$.

We use the quantum lift of M\'enard's seed obtained by applying
its prescribed mutation sequence to the initial monoidal seed
of $\mathscr C_w$, then freezing and retaining the prescribed
vertices. The weave comparison identifies the classical
mutation and deletion data; the monoidal categorification
theorem of~\cite{kkko2018monoidal} realizes these mutations
by modules.
For $u\in V_{\bw,v}$, let $M_u$ be the resulting self-dual real
simple module at $u$. Since these modules belong to a common
monoidal seed, they are pairwise strongly commuting.

Let $\gamma_u$ be the Lusztig cycle attached to $u$, and let
$e_{u,\mathrm{nw}}$ be its northwest incoming edge.
Write $E_e$ for the $e$th standard basis vector of $\ZZ^r$.
We define the weave vectors recursively by
\begin{equation}\label{eq:etauw}
 \eta_u^{\bw}
 :=
 E_{q_u}
 +\sum_{u'<u}
    \gamma_{u'}(e_{u,\mathrm{nw}})\eta_{u'}^{\bw}.
\end{equation}
The order is the one induced by the positions $q_u$ in
$\bw$. Thus the right-hand side involves only vectors
already defined, and the recursion determines
$\eta_u^{\bw}$ uniquely.
Our aim is to show that these combinatorial vectors are
precisely the Lusztig parameters of the modules $M_u$.

We recall the parameter rule used in this comparison.
For vectors in $\ZZ^r$, the right lexicographic order compares
the entries at the largest index where they differ.
We write $\max_{\mathrm r}$ for the maximum in this order.

\begin{lemma}[Mutation of parameters]\label{lem:parameter-mutation}
Let $(N_s)_s$ be the modules of a monoidal seed in
$\mathscr C_w$, with exchange matrix $B=(b_{st})$, and put
$\ba_s=\ba^{\bw}(N_s)$.
If mutation at $t$ replaces $N_t$ by $N_t'$, then
\begin{equation}\label{eq:parameter-mutation}
 \ba^{\bw}(N_t')
 =\max_{\mathrm r}\left\{
       \sum_s[b_{st}]_+\ba_s,
       \sum_s[-b_{st}]_+\ba_s
   \right\}-\ba_t,
 \qquad [b]_+:=\max\{b,0\}.
\end{equation}
All other parameter vectors remain unchanged.
\end{lemma}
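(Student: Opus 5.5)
The plan is to read the mutation relation in the Grothendieck ring and pass to leading terms of dual PBW expansions. Monoidal categorification (as in \cite{kkko2018monoidal}) gives a short exact sequence
\[
 0\to \bigodot_s N_s^{\odot[b_{st}]_+}\to N_t\circ N_t'\to \bigodot_s N_s^{\odot[-b_{st}]_+}\to 0
\]
up to grading shifts, possibly with the two ends interchanged. Here $N_t\nabla N_t'$ is the simple head and the other product is the simple socle, and both outer terms are simple because the seed modules pairwise strongly commute. By \eqref{eq:pbw-simple-product}, the outer terms have parameters $\ba_+=\sum_s[b_{st}]_+\ba_s$ and $\ba_-=\sum_s[-b_{st}]_+\ba_s$.

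Next we compare these with $N_t\circ N_t'$. The dual PBW expansion of a simple module $X\in\mathscr C_w$ has the form $[X]=E^*(\ba^{\bw}(X))+\sum_{\mathbf b\prec\ba}c_{\mathbf b}E^*(\mathbf b)$, taken with respect to the order $\prec$ for which the leading term is maximal. This is the unitriangularity underlying \eqref{eq:pbw-simple-product}, and I would take $\prec$ compatible with $\max_{\mathrm r}$ in the normalization of \cite[Section~4]{kimura2012quantum}. Multiplying dual PBW monomials, the leading term of $[N_t][N_t']$ is $E^*(\ba_t+\ba^{\bw}(N_t'))$ up to a power of $q$. This holds because a product of dual PBW generators is again a dual PBW monomial plus terms that are smaller in this order, by the Levendorskii--Soibelman straightening in $\mathscr C_w$.

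On the other hand, $[N_t\circ N_t']=[\text{socle}]+[\text{head}]$, and each summand has leading term $E^*(\ba_\pm)$ with coefficient a power of $q$. Since $\ba_+\neq\ba_-$ (otherwise the two simple outer modules would coincide, which is impossible for a nontrivial mutation), the leading terms cannot cancel. Indeed, both coefficients lie in $q^{\ZZ}$ with positive sign, so there is no cancellation even when the exponents agree. Hence $\ba_t+\ba^{\bw}(N_t')=\max_{\mathrm r}\{\ba_+,\ba_-\}$, which is \eqref{eq:parameter-mutation}. The remaining modules of the seed are unchanged by mutation, so their parameters are unchanged.

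The main obstacle is to justify that the total order governing leading terms of products of dual PBW monomials is exactly the right lexicographic order $\max_{\mathrm r}$ for our recursion \eqref{eq:lusztig-saito-recursion}, and not the left one. I would verify this by induction through the Saito recursion. The first coordinate is $\varepsilon^*_{i_1}$, which is additive on convolution heads, and the rightmost factors in the convolution ordering of dual root vectors dominate under the Levendorskii--Soibelman relations. If the convention turns out reversed, the same argument goes through with the opposite order and \eqref{eq:parameter-mutation} must be stated with that order instead.
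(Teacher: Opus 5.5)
Your proposal is correct and follows essentially the same route as the paper. The paper writes down the quantum exchange relation, which is the Grothendieck-ring image of your short exact sequence, uses that leading dual PBW parameters add under products with $q$-power leading coefficients, and notes that the two leading contributions on the right cannot cancel.

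Your worry about left versus right lexicographic order is harmless. The Levendorskii--Soibelman straightening and the dual PBW expansions of simple modules are unitriangular for the bi-lexicographic partial order, which the right lexicographic order refines. So the argument gives exactly the stated $\max_{\mathrm r}$ form, and the reversed case does not arise.
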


\begin{proof}
Fix an order of the factors in the quantum exchange relation
\[
 q^{c_0}[N_t][N_t']
 =q^{c_+}\prod_s[N_s]^{[b_{st}]_+}
  +q^{c_-}\prod_s[N_s]^{[-b_{st}]_+}.
\]
The leading dual PBW parameter of a product is the sum of
the parameters of its factors, and its leading coefficient
is a power of $q^{1/2}$.
The two leading contributions on the right cannot cancel.
Consequently, the leading parameter of that side is the
right lexicographic maximum of the parameters of its two
monomials. Comparing with the left-hand side gives
\eqref{eq:parameter-mutation}.
This is the argument of
\cite[Lemma~7.2]{bi2026monoidal}, applied in the ordinary
quantum unipotent coordinate ring.
\end{proof}

The maximum in~\eqref{eq:parameter-mutation} is taken in a
total order; it is not a coordinatewise maximum.
The following two compatibility results explain why the same
rule can be used to compare modules and weave vectors.

\begin{lemma}[Transport of recursive vectors]\label{lem:weave-transport}
Start with the recursively defined vectors of a right
inductive weave for a word $\boldsymbol\alpha$.
Move its first right insertion to a final left insertion,
and apply the induced mutations to the vectors using
\eqref{eq:parameter-mutation}.
The resulting vectors satisfy
\[
 \zeta_z=E_{p_z}
       +\sum_{y<z}\gamma_y^{\mathrm{new}}(e_{z,\mathrm{nw}})\zeta_y
\]
at every trivalent vertex except possibly the one created
by the final left insertion.
Here $p_z$ is the position in $\boldsymbol\alpha$ of the letter
creating $z$, and the cycles and vertex order are those of
the resulting weave.
\end{lemma}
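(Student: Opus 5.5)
Moving the first right insertion to a final left insertion changes the weave by a finite sequence of local moves, and each induced seed mutation is given by \eqref{eq:parameter-mutation}. I would therefore argue by induction along this sequence. The statement does not depend on which sequence is chosen, so I fix the standard one, as in \cite[Section~5]{bi2026monoidal} and \cite[Section~4]{casals2025cluster}. The first letter $\alpha_1$ is pushed through the weave of the remaining word $\boldsymbol\alpha'=(\alpha_2,\dots)$. Each time it meets a trivalent vertex $z$ of $\overrightarrow{\mathfrak W}(\boldsymbol\alpha')$, the local configuration is either a braid/commutation move, which leaves the seed unchanged, or a mutation at a single vertex. The invariant I would carry is the displayed recursion, with the cycles and vertex order of the \emph{current} intermediate weave, at every trivalent vertex other than the one attached to the moving strand. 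At the start this is exactly the hypothesis \eqref{eq:etauw}. At the end it is the claim.

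At a non-mutating step I would check two things. First, the positions $p_z$ do not change. Second, the order on vertices does not change, since it is indexed by positions in $\boldsymbol\alpha$. The Lusztig cycles of the new weave then differ from the old ones only by applying the tropical braid rule on the swapped region. The rules listed for cycles (swap at commutations, the $(a,b,c)\mapsto(b+c-p,p,a+b-p)$ rule, and the minimum at trivalent vertices) are involutive or compatible with these local moves. Hence the northwest incoming weights $\gamma_y(e_{z,\mathrm{nw}})$ are unchanged for every $z$ below the modified region, and the recursion persists.

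The mutation step is the main obstacle. Suppose the moving strand passes a trivalent vertex $t$, so that $t$ is mutated. Its new vector is
\[
 \zeta_t'=\max_{\mathrm r}\Bigl\{\sum_s[b_{st}]_+\zeta_s,\sum_s[-b_{st}]_+\zeta_s\Bigr\}-\zeta_t .
\]
The exchange matrix entries come from the weave intersection form of Casals et al., namely signed counts of local intersections of cycles. I would do three things here.
\begin{enumerate}
\item Expand both monomials using the inductive recursion, so that each $\zeta_s$ is a nonnegative combination of standard vectors $E_{p_y}$ that is unitriangular with respect to the order on positions.
\item Identify the largest position at which the two monomials differ. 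I expect this to be $p_t$ itself or the position of the moving letter, so that the $\max_{\mathrm r}$ is decided by a single coordinate. The sign there should be determined by whether the new cycle $\gamma_t^{\mathrm{new}}$ enters from the northwest.
\item Compare the difference of the winning monomial and $\zeta_t$ with $E_{p_t}+\sum_{y<t}\gamma_y^{\mathrm{new}}(e_{t,\mathrm{nw}})\zeta_y$.
\end{enumerate}
The identity needed in the third step is precisely the local tropical identity $\min$/$\max$ at a trivalent vertex relating old and new cycle weights. I would take it from the coroot identity \cite[Theorem~6.17]{bi2026monoidal}, in the form proved in \cite[Lemma~7.2]{bi2026monoidal}.

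Finally, I must check that vertices $z$ other than $t$ and the final inserted vertex keep their recursion. They are not mutated, but the cycles $\gamma_t$ may change. The weight $\gamma_t^{\mathrm{new}}(e_{z,\mathrm{nw}})\zeta_t'$ must then equal the old contribution $\gamma_t(e_{z,\mathrm{nw}})\zeta_t$, plus the corrections absorbed by the other terms. This is the part I expect to require the most careful bookkeeping. I would handle it by propagating the mutation formula downward, using that cycles transform under mutation by the tropical (Lusztig) cluster transformation. The only vertex where this compatibility can fail is the one created by the final left insertion, which is why it is excluded.
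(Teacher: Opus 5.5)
\textbf{Verdict: genuine gap.} The paper's proof is pure citation. For transports containing a mutation it quotes \cite[Corollary~6.10]{bi2026monoidal}. For the intervening weave equivalences, and for the mutation-free case, it quotes \cite[Lemma~6.7]{bi2026monoidal}. Your treatment of the equivalence steps plays the role of the latter. Note, though, that involutivity of the braid rule only covers a braid move followed by its inverse; moves such as sliding a trivalent vertex through a hexavalent one need a separate check, which is what that lemma provides.

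The gap is the mutation step, which is the whole content of the statement. Steps (2)--(3) and your last paragraph say what you expect, not why it holds. The references you invoke do not supply the missing identity:
\begin{itemize}
\item \cite[Theorem~6.17]{bi2026monoidal} is a global identity on a slice, expressing $\sigma_{e,k}^{-1}(\alpha_{i_e}^\vee)$ through the weave vectors and the coroots $\gamma_{u,k}$. Applying it to the transported vectors would presuppose that they already form the recursive family of the new weave.
\item \cite[Lemma~7.2]{bi2026monoidal} is the leading-term argument that yields \eqref{eq:parameter-mutation}. That is the update rule itself, not its compatibility with the new Lusztig cycles.
\item The recursions at the unmutated vertices rest on your assertion that edge weights of Lusztig cycles change by a tropical cluster transformation. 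This is also unproved.
\end{itemize}

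The plan in (2)--(3) is moreover aimed at the wrong target.

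First, the moving letter sits at position $1$ of $\boldsymbol\alpha$, which is always an ascent in a right inductive weave. So no initial vector involves $E_1$. An update $\max_{\mathrm r}\{A,B\}-C$ of vectors with zero first coordinate again has zero first coordinate, so that coordinate never decides the comparison. This is also why the vertex created by the final left insertion, whose recursion would demand $E_1$, must be excluded.

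Second, the comparison is not decided at $p_t$ either. Suppose the neighbours of $t$ satisfy the recursion, as your invariant asserts for all of them except possibly the one attached to the moving strand.
\begin{itemize}
\item Each $\zeta_s$ is then supported in $[1,p_s]$ with coefficient $1$ at $p_s$.
\item Since $b_{tt}=0$, the largest coordinate at which the two exchange monomials differ is the largest position $P$ of a neighbour of $t$, and $P\neq p_t$.
\item If $P<p_t$, then $\zeta_t'$ has coefficient $-1$ at $p_t$. If $P>p_t$, then $\zeta_t'$ has a nonzero entry at $P>p_t$.
\end{itemize}
In neither case is $\zeta_t'$ of the form $E_{p_t}+\sum_{y<t}(\cdots)$. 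So in the resulting weave the mutated vertex cannot keep its creating letter or its place in the order. An invariant with fixed positions and a fixed order therefore cannot be carried through a mutation.

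The missing computation is to determine where $t$ goes, and how the predecessor sets and the weights $\gamma_y(e_{z,\mathrm{nw}})$ change at the other vertices. Either cite \cite[Corollary~6.10]{bi2026monoidal} for this, or carry out that local analysis at the mutation move explicitly.
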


\begin{proof}
When the transport contains a mutation, this is
\cite[Corollary~6.10]{bi2026monoidal}.
The intervening weave equivalences preserve the relevant
cycle weights by \cite[Lemma~6.7]{bi2026monoidal}.
The same equivalences give the assertion when no mutation
occurs.
\end{proof}

\begin{lemma}[Extension from a truncated weave]
\label{lem:truncated-weave-mutations}
Let $\mathfrak U$ be a truncated double inductive weave with
vertex set $S$, and let $\mathfrak W$ be obtained by restoring
the remaining insertions, with vertex set $K\supseteq S$.
For every vertex $t$ mutable in $\mathfrak U$,
\begin{equation}\label{eq:truncated-weave-exchange}
 b^{\mathfrak W}_{zt}
 =\begin{cases}
    b^{\mathfrak U}_{zt},&z\in S,\\
    0,&z\in K\setminus S.
   \end{cases}
\end{equation}
Every mutation sequence in the truncated weave extends to
the full weave with the same exchange monomials.
The variables at the additional vertices remain unchanged.
\end{lemma}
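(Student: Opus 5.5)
The plan is to compute the exchange matrix of a weave through the intersection pairing of Lusztig cycles, following \cite{casals2025cluster} and \cite[Section~5]{bi2026monoidal}. In that description, $b_{zt}$ is a sum of local contributions taken at the trivalent and hexavalent vertices where the cycles $\gamma_z$ and $\gamma_t$ are both nonzero on incident edges. Since the pairing is local, everything reduces to comparing the supports of cycles in $\mathfrak U$ and in $\mathfrak W$.

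First I will show that the cycle of a mutable vertex does not change when the remaining insertions are restored. A vertex $t$ is mutable in $\mathfrak U$ exactly when $\gamma_t^{\mathfrak U}$ has weight zero on every edge of the boundary slice of $\mathfrak U$. Restoring an insertion glues new strands and new moves only along that boundary; in the double inductive picture these are left or right insertions. The tropical Lusztig rules propagate only downwards and are local, and every incoming weight of $\gamma_t$ at the new moves is zero. Hence $\gamma_t^{\mathfrak W}$ vanishes outside $\mathfrak U$ and coincides with $\gamma_t^{\mathfrak U}$ on $\mathfrak U$. For $z\in S$, the same locality shows that $\gamma_z^{\mathfrak W}$ restricted to $\mathfrak U$ equals $\gamma_z^{\mathfrak U}$.

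With these supports in hand, I will evaluate $b^{\mathfrak W}_{zt}$ as a sum of local terms at vertices where both cycles live. All such vertices lie inside $\mathfrak U$, because $\gamma_t$ vanishes elsewhere, and there the two cycles agree with their $\mathfrak U$-versions. This gives $b^{\mathfrak W}_{zt}=b^{\mathfrak U}_{zt}$ for $z\in S$. For $z\in K\setminus S$, the vertex $z$ lies outside $\mathfrak U$, and $\gamma_z$ is zero above $z$. So $\gamma_z$ and $\gamma_t$ have disjoint support, and $b_{zt}=0$. This proves \eqref{eq:truncated-weave-exchange}.

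For mutation sequences, I will argue by induction on the length of the sequence. Mutation at $t$ is realized by a local weave equivalence (a move sliding the trivalent vertex), performed inside $\mathfrak U$. Restoring the insertions commutes with this equivalence, so the mutated full weave is again the restoration of the mutated truncated weave. By \eqref{eq:truncated-weave-exchange}, the exchange monomials $\prod_z x_z^{[\pm b_{zt}]_+}$ in $\mathfrak W$ involve only variables indexed by $S$, with the same exponents as in $\mathfrak U$. Variables other than $x_t$ are unchanged by mutation, and in particular those at $K\setminus S$ are unchanged. The induction hypothesis then transfers to the next step.

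The main obstacle is this commutation: I must check that the weave equivalence realizing mutation in $\mathfrak U$ does not interact with the restored insertions. I must also check that the vanishing of the mutated cycle on the boundary of $\mathfrak U$, which is the mutability condition, is preserved. I would first try to prove both by showing that every move lies strictly above the boundary slice of $\mathfrak U$ and using \cite[Lemma~6.7]{bi2026monoidal}, which says weave equivalences preserve cycle weights.
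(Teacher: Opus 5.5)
Your proof of \eqref{eq:truncated-weave-exchange} is sound, and it supplies the content the paper simply cites from \cite[Lemma~7.6]{bi2026monoidal}. The cycle of a vertex mutable in $\mathfrak U$ has zero weight on the bottom slice of $\mathfrak U$ and on the restored side strands, so the downward rules keep it zero on the restored part. Consequently, at every vertex of $\mathfrak W$ one of the two cycles vanishes on all incident edges, except at vertices of $\mathfrak U$ with $z,t\in S$. The local intersection numbers then give both cases. (Phrase ``disjoint support'' as ``no vertex has incident edges in both supports''; your locality argument shows exactly this.)

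The gap is in the second half. Your induction rests on the claim that each mutation at $t$ is a local weave move inside $\mathfrak U$ that commutes with restoring the insertions. You flag this as the main obstacle and leave it open, and in general it is not available:
\begin{itemize}
\item A seed mutation at a mutable vertex is a weave mutation only when the weave, after equivalences, has the required local configuration at $t$.
\item So an arbitrary mutation sequence of the seed of $\mathfrak U$ can leave the class of weave seeds. Your induction then has no glued weave to which \eqref{eq:truncated-weave-exchange} could be reapplied.
\item Even for weave-realizable steps, the mutated weave is no longer double inductive. You would need to re-establish the first part for general glued weaves.
\end{itemize}

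No geometry is needed after the first step; this is what the paper means by saying the vanishing ``persists along the mutation sequence''. Identity \eqref{eq:truncated-weave-exchange} says that each column of $B^{\mathfrak W}$ indexed by a vertex mutable in $\mathfrak U$ is the corresponding column of $B^{\mathfrak U}$, padded by zeros in the rows $K\setminus S$. Matrix mutation at such a $t$ is
\[
 b'_{zs}=
 \begin{cases}
 -b_{zs},& z=t\text{ or }s=t,\\
 b_{zs}+[b_{zt}]_+[b_{ts}]_+-[-b_{zt}]_+[-b_{ts}]_+,&\text{otherwise}.
 \end{cases}
\]
In a column $s$ mutable in $\mathfrak U$, the new entry uses only $b_{zs}$, $b_{zt}$ and $b_{ts}$. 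These are zero when $z\in K\setminus S$ and otherwise lie in the $\mathfrak U$-block. Hence the block form is preserved, and the block mutates exactly as $B^{\mathfrak U}$ does. By induction, the exchange monomials never involve variables at $K\setminus S$, they agree with those of $\mathfrak U$, and the variables at $K\setminus S$ never change. This argument covers every sequence of mutations at vertices mutable in $\mathfrak U$, not only weave-realizable ones.
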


\begin{proof}
This is \cite[Lemma~7.6]{bi2026monoidal}.
The vanishing in~\eqref{eq:truncated-weave-exchange} means
that the additional vertices do not occur in the exchange
monomials. This property persists along the mutation sequence,
so the vector updates agree as well.
\end{proof}

\begin{proposition}\label{prop:weave-parameters}
For every $u\in V_{\bw,v}$, one has
\[
 \ba^{\bw}(M_u)=\eta_u^{\bw}.
\]
\end{proposition}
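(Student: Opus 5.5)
We will prove the proposition by induction along the mutation sequence that produces M\'enard's seed from the initial monoidal seed of $\mathscr C_w$, comparing, at each stage, the Lusztig parameters of the modules with the recursively defined weave vectors.

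\emph{Base case.} We start from the initial seed attached to the right inductive weave of $\bw$ itself, i.e.\ the case $v=1$, $\mathbf h$ a reduced word for $w_0$. Here every position is unselected, $J_{\bw,1}=\{1,\dots,r\}$, and the initial cluster modules are the prefix determinantial modules $M(w_{\leq k}\varpi_{i_k},\varpi_{i_k})$. Corollary~\ref{cor:prefix-parameters} gives their parameters as $\bdelta_{k,i_k}$, whose coordinates are $\langle\alpha_{i_e}^\vee,\varpi_{i_k}\rangle$ for $e\leq k$; that is, the indicator of the positions $e\leq k$ with $i_e=i_k$. On the weave side, in the right inductive weave of $\mathbf h\bw$ with $\delta(\mathbf h)=w_0$, every letter of $\bw$ causes a contraction. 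I would check directly from the tropical Lusztig rules that $\gamma_{u'}(e_{u,\mathrm{nw}})$ equals $1$ exactly when $u'$ is the previous vertex of the same colour, and $0$ otherwise. Then \eqref{eq:etauw} telescopes to the same indicator vectors. This settles the base case.

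\emph{Inductive step.} We pass from $\mathbf h\bw$ with $\mathbf h$ a word for $w_0$ to the general $\mathbf h$ (a word for $w_0v^{-1}$) by moving insertions. M\'enard's mutation sequence corresponds, under \cite[Theorem~10.1]{casals2025cluster}, to a sequence of weave moves: each step moves a first right insertion to a final left insertion, which deletes or freezes letters of the top block. At each step:
\begin{enumerate}
\item Lemma~\ref{lem:parameter-mutation} says the actual parameters $\ba^{\bw}(M_u)$ transform by the tropical rule \eqref{eq:parameter-mutation}.
\item Lemma~\ref{lem:weave-transport} says that the recursive vectors, transformed by the same rule, again satisfy the recursion \eqref{eq:etauw} for the new weave, except possibly at the newly created vertex.
\item Lemma~\ref{lem:truncated-weave-mutations} ensures that mutations performed in a truncated weave extend unchanged to the full weave, so vertices outside the current window keep their vectors.
\end{enumerate}
Since the recursion determines vectors uniquely, the two families coincide at every stage.

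\emph{Main obstacle.} The hardest point is the exceptional vertex created by the final left insertion, where Lemma~\ref{lem:weave-transport} gives no information. I would handle it with Theorem~\ref{thm:lusztig-determinantial}. The exceptional vertex is frozen, and its module is a prefix determinantial module $M(w_{\leq k}\varpi_j,v_{\leq k}\varpi_j)$ for the current pair. By Corollary~\ref{cor:prefix-parameters}, its parameter is $\bdelta_{k,j}$. It then remains to verify combinatorially that $\bdelta_{k,j}=\sum_{q_u\leq k}\gamma_{u}(\text{slice edge})\,\eta_u^{\bw}$ is consistent with \eqref{eq:etauw} at that vertex. I would derive this from the coroot identity \cite[Theorem~6.17]{bi2026monoidal}, which expresses $\sigma_{e,k}^{-1}(\alpha^\vee_{i_e})$ through cycle weights on the slice. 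Once the vectors agree along the whole sequence, the final frozen and deleted vertices restrict to $V_{\bw,v}$. Because zeros are appended compatibly under $\mathscr C_{w_{\leq k}}\subseteq\mathscr C_w$, this yields $\ba^{\bw}(M_u)=\eta_u^{\bw}$.
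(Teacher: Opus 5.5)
Your architecture is the paper's. You realize the initial seed of $\mathscr C_w$ by $\overrightarrow{\mathfrak W}(\boldsymbol\Delta\bw)$ for a reduced word $\boldsymbol\Delta$ of $w_0$, match the initial parameters with the initial weave vectors, and transport first right insertions to final left insertions using Lemmas~\ref{lem:parameter-mutation}, \ref{lem:weave-transport} and~\ref{lem:truncated-weave-mutations}. You should fix $\boldsymbol\Delta=(j_1^*,\ldots,j_m^*,\mathbf h)$, where $\bv=(j_1,\ldots,j_m)$ is the selected subexpression. Then after $m$ transports and deletion of the left insertions you land exactly on $\overrightarrow{\mathfrak W}(\mathbf h\bw)$. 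Your base-case claim about the cycle weights is equivalent, given unitriangularity, to Lemma~6.12 of \cite{bi2026monoidal}.

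The genuine gap is your ``main obstacle''. You close the induction by requiring the exceptional vertex $z$, created by the final left insertion, to satisfy the recursion as well. To get this you identify its module with a prefix determinantial module. Nothing in the paper justifies that identification, and the target is wrong. The recursion of Lemma~\ref{lem:weave-transport} at $z$ contains $E_{p_z}$, where $p_z$ is the position of the transported letter $j_s^*$. That coordinate lies in the $\boldsymbol\Delta$-block, where every categorical parameter vanishes, so the recursion at $z$ should fail. Your displayed identity is also not the paper's factorization identity, which uses the pairings $\langle\gamma_{u,k},\varpi_j\rangle$ summed over all slice edges with inversion coroots. In any case that identity concerns vertices of the $\bw$-block, not $z$.

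The missing idea is that $z$ is irrelevant rather than difficult. Take as inductive hypothesis only the following: in the truncated, right inductive weave with all left insertions $j_s^*\mathrm L,\ldots,j_1^*\mathrm L$ omitted, the parameters at the $\bw$-block vertices are its recursive vectors. The exceptional vertices then cause no trouble:
\begin{itemize}
\item Each exceptional vertex is created after the whole $\bw$-block, so it follows every $\bw$-block vertex in the vertex order and never appears in their recursion.
\item By Lemma~\ref{lem:truncated-weave-mutations}, it does not enter the exchange monomials of later transports, so it may be omitted at the next step.
\item It is deleted at the end.
\end{itemize}
You also need two further points. First, the remaining top block $(j_{s+1}^*,\ldots,j_m^*,\mathbf h)$ is a reduced suffix of $\boldsymbol\Delta$, so it creates no trivalent vertices; hence the recursion at a $\bw$-block vertex involves only $\bw$-block vertices. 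Second, the relevant zero extension is $\iota:\ZZ^r\hookrightarrow\ZZ^{\ell+r}$, not the inclusion $\mathscr C_{w_{\leq k}}\subseteq\mathscr C_w$. It commutes with vector mutation because it preserves sums and the right lexicographic order. With these points your uniqueness argument finishes the proof, and Theorem~\ref{thm:lusztig-determinantial} is needed only in the base case.
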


\begin{proof}
Let $\bv=(j_1,\ldots,j_m)$ be the reduced expression of
$v$ selected by the leftmost reduced subexpression of
$\bw$, and define $i^*\in I$ by
$s_{i^*}w_0=w_0s_i$.
Since $\mathbf h$ is reduced for $w_0v^{-1}$, the word
\[
 \boldsymbol\Delta
 :=
 (j_1^*,\ldots,j_m^*,\mathbf h)
\]
is reduced for $w_0$.
Indeed, its product is
$(w_0vw_0)(w_0v^{-1})=w_0$, and its length is
$\ell(w_0)$.

We begin with
$\overrightarrow{\mathfrak W}(\boldsymbol\Delta\bw)$.
Its seed is identified with the usual initial seed of
$\mathscr C_w$; see
\cite[Section~7, equation~(7.2)]{bi2026monoidal}.
The module at the vertex corresponding to position $k$
of $\bw$ is
\[
 L_k=M(w_{\leq k}\varpi_{i_k},\varpi_{i_k}).
\]
By Theorem~\ref{thm:lusztig-determinantial},
\begin{equation}\label{eq:initial-weave-parameters}
 \ba^{\bw}(L_k)
 =
 \sum_{\substack{1\leq a\leq k\\i_a=i_k}}E_a.
\end{equation}
These are also the initial weave vectors.
Indeed, if $k^-$ is the preceding occurrence of $i_k$
in $\bw$, then
\cite[Lemma~6.12]{bi2026monoidal} gives
\[
 \eta_k^{(0)}=E_k+\eta_{k^-}^{(0)},
\]
with the second term omitted when there is no preceding
occurrence. Iterating this identity gives exactly
\eqref{eq:initial-weave-parameters}.

The combinatorial vectors of the full weave naturally have
$\ell+r$ coordinates, where $\ell=\ell(w_0)$.
The first $\ell$ coordinates vanish because the reduced
$\boldsymbol\Delta$-block creates no trivalent vertices.
Use the zero extension
\[
 \iota:\ZZ^r\hookrightarrow\ZZ^{\ell+r},\qquad
 \iota(E_k)=E_{\ell+k}
\]
to compare them with the categorical parameters in
\eqref{eq:initial-weave-parameters}.
This embedding preserves addition and right lexicographic order:
\[
 \max_{\mathrm r}\{\iota(\mathbf a),\iota(\mathbf b)\}
 =\iota\bigl(\max_{\mathrm r}\{\mathbf a,\mathbf b\}\bigr).
\]
Consequently, vector mutation preserves $\iota(\ZZ^r)$ and
commutes with $\iota$.
We suppress this zero extension below and record all vectors
in $\ZZ^r$.
The enlarged lattice labels combinatorial weave vectors;
the categorical parameters always refer to the reduced word $\bw$.

For $0\leq s\leq m$, let $\mathfrak W^{(s)}$ be the
weave associated with the double string
\[
 \bigl(
 j_{s+1}^*\mathrm R,\ldots,j_m^*\mathrm R,
 \mathbf h\,\mathrm R,\bw\,\mathrm R,
 j_s^*\mathrm L,\ldots,j_1^*\mathrm L
 \bigr).
\]
A block marked $\mathrm R$ means that each of its letters
is inserted on the right.
Thus
\[
 \mathfrak W^{(0)}
 =
 \overrightarrow{\mathfrak W}(\boldsymbol\Delta\bw),
 \qquad
 \mathfrak W^{(m)}
 =
 \mathfrak W\bigl(
 \mathbf h\,\mathrm R,\bw\,\mathrm R,
 j_m^*\mathrm L,\ldots,j_1^*\mathrm L
 \bigr).
\]
The passage from $\mathfrak W^{(s-1)}$ to
$\mathfrak W^{(s)}$ moves $j_s^*$ from the initial
right-insertion block to a left insertion immediately
before the previously transported letters.
Apply the corresponding mutations and relabelings to
the monoidal seed, and denote the resulting parameter
at a vertex $z$ by $\mathbf d_z^{(s)}$.

We show inductively that, after omitting the final left
insertions, the parameters in the $\bw$-block form the
recursively defined vector family of the remaining right
inductive weave, embedded in the fixed coordinate lattice.
The initial case follows from
\eqref{eq:initial-weave-parameters}.
For the $s$th step, temporarily omit the final
insertions
$j_{s-1}^*\mathrm L,\ldots,j_1^*\mathrm L$.
The remaining weave is right inductive, so Lemma~\ref{lem:weave-transport} applies to the transport of $j_s^*$.
The module parameters undergo precisely the required
vector mutations by~\eqref{eq:parameter-mutation}.
They therefore satisfy the recursion at every vertex
except possibly the new final vertex attached to
$j_s^*\mathrm L$.

Restoring the omitted insertions does not alter this
calculation: by Lemma~\ref{lem:truncated-weave-mutations} and
\eqref{eq:truncated-weave-exchange},
the mutations have the same exchange monomials in the
full weave.
Moreover, the exceptional vertex and all previously
transported left insertions occur after the entire
$\bw$-block.
They do not enter the recursion at a vertex in that
block.
The remaining initial block
\[
 (j_{s+1}^*,\ldots,j_m^*,\mathbf h)
\]
is a reduced suffix of $\boldsymbol\Delta$, so it
contains no trivalent vertices.
Hence the recursion at each vertex of the $\bw$-block
involves only earlier vertices of the same block.
This completes the induction.

After the $m$ transports, delete the final left
insertions.
The remaining weave is
$\overrightarrow{\mathfrak W}(\mathbf h\bw)$.
By the comparison with M\'enard's construction in
\cite[Section~10.1]{casals2025cluster},
the surviving modules are precisely the $M_u$.
Their parameters are unchanged by deletion.
The cycles on this retained initial part of the weave
are also unchanged, so the recursion just proved becomes
\[
 \ba^{\bw}(M_u)
 =
 E_{q_u}
 +\sum_{u'<u}
   \gamma_{u'}(e_{u,\mathrm{nw}})
   \ba^{\bw}(M_{u'}).
\]
This is the defining recursion~\eqref{eq:etauw}.
Induction on the ordered set $V_{\bw,v}$ therefore gives
$\ba^{\bw}(M_u)=\eta_u^{\bw}$ for every $u$.
\end{proof}

\begin{corollary}\label{cor:weave-support}
For every $u\in V_{\bw,v}$,
\[
 \supp\ba^{\bw}(M_u)\subseteq J_{\bw,v}\cap[1,q_u],
 \qquad \bigl(\ba^{\bw}(M_u)\bigr)_{q_u}=1.
\]
\end{corollary}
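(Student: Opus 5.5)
By Proposition~\ref{prop:weave-parameters}, it suffices to prove both assertions for the weave vectors $\eta_u^{\bw}$. We argue by strong induction on the ordered set $V_{\bw,v}$, using the recursion~\eqref{eq:etauw}. Throughout we use the bijection $u\mapsto q_u$ of $V_{\bw,v}$ onto $J_{\bw,v}$, so every $q_u$ lies in $J_{\bw,v}$.

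For the support claim, fix $u$ and assume the claim for all $u'<u$. The first term $E_{q_u}$ of~\eqref{eq:etauw} has support $\{q_u\}$, which lies in $J_{\bw,v}\cap[1,q_u]$. Each remaining term is a multiple of some $\eta_{u'}^{\bw}$ with $u'<u$. By induction,
\[
\supp\eta_{u'}^{\bw}\subseteq J_{\bw,v}\cap[1,q_{u'}]\subseteq J_{\bw,v}\cap[1,q_u-1],
\]
because $q_{u'}<q_u$. The coefficients $\gamma_{u'}(e_{u,\mathrm{nw}})$ are nonnegative integers, so no cancellation can enlarge the support. Hence $\supp\eta_u^{\bw}\subseteq J_{\bw,v}\cap[1,q_u]$.

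For the coordinate at $q_u$, the same inclusion shows that every term with $u'<u$ vanishes in position $q_u$. Only $E_{q_u}$ contributes there, so $(\eta_u^{\bw})_{q_u}=1$.

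The only point needing care is that the order on $V_{\bw,v}$ used in~\eqref{eq:etauw} is strict and is induced by the positions $q_u$. This holds because $u\mapsto q_u$ is injective, so the sum in~\eqref{eq:etauw} involves only vertices created strictly earlier. Apart from this, the argument is routine bookkeeping, and the base case, the minimal vertex, has an empty sum.
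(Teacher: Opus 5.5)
Your proof is correct and follows the same route as the paper: you reduce to $\eta_u^{\bw}$ via Proposition~\ref{prop:weave-parameters}, then induct along the order on $V_{\bw,v}$, using that each earlier $\eta_{u'}^{\bw}$ is supported in $J_{\bw,v}\cap[1,q_{u'}]$ with $q_{u'}<q_u$. The appeal to nonnegativity of the cycle weights is unnecessary, since the support of any linear combination already lies in the union of the supports of its terms.
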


\begin{proof}
By Proposition~\ref{prop:weave-parameters}, it suffices to prove
the assertions for $\eta_u^{\bw}$. Proceed by induction in the
order of the vertices. Every vector $\eta_{u'}^{\bw}$ appearing
in \eqref{eq:etauw} is supported in
$J_{\bw,v}\cap[1,q_{u'}]$, where $q_{u'}<q_u$.
The added term $E_{q_u}$ therefore gives both the asserted support
and the coefficient $1$ at position $q_u$.
\end{proof}

\subsection{Cycle multiplicities and determinantial factorizations}\label{subsec:factorization}

For $0\leq k\leq r$, let
\[
 \beta_k=\mathbf h(i_1,\ldots,i_k),\qquad
 d_k=\delta(\beta_k).
\]
Choose a horizontal slice after the first $k$ letters of the $\bw$-block have been
processed. Reading its edges from left to right gives a reduced
expression
$d_k=s_{j_1}\cdots s_{j_{\ell_k}}$, where $\ell_k=\ell(d_k)$.
Denote these edges by
$\mathsf e_1^{(k)},\ldots,\mathsf e_{\ell_k}^{(k)}$ and put
\begin{equation}\label{eq:inversion-coroot-c}
 \chi_a^{(k)}
 :=s_{j_{\ell_k}}\cdots s_{j_{a+1}}(\alpha_{j_a}^\vee),
 \qquad 1\leq a\leq\ell_k.
\end{equation}
For $u\in V_{\bw,v}$ with $q_u\leq k$, define
\begin{equation}\label{eq:gamma-vc}
 \gamma_{u,k}
 :=\sum_{a=1}^{\ell_k}
       \gamma_u(\mathsf e_a^{(k)})\chi_a^{(k)}\in Q^\vee,
 \qquad
 m_{u,k}^{\,j}:=\langle\gamma_{u,k},\varpi_j\rangle.
\end{equation}
More generally, put
$m_{u,k}(\Lambda):=\langle\gamma_{u,k},\Lambda\rangle$
for $\Lambda\in P_+$.
The inversion coroots are positive and Lusztig-cycle weights
are nonnegative integers, so all these multiplicities are
nonnegative integers.
The coroot $\gamma_{u,k}$ is independent of braid moves changing
the chosen reduced expression; see
\cite[Lemma~6.15]{bi2026monoidal}.

\begin{theorem}\label{thm:determinantial-factorization}
For $1\leq k\leq r$ and $\Lambda\in P_+$, there is an isomorphism
\begin{equation}\label{eq:dominant-determinantial-factors}
 M(w_{\leq k}\Lambda,v_{\leq k}\Lambda)
 \simeq
 \bigodot_{\substack{u\in V_{\bw,v}\\q_u\leq k}}
       M_u^{\odot m_{u,k}(\Lambda)}.
\end{equation}
In particular, for every $j\in I$,
\begin{equation}\label{eq:determinantial-factors}
 M(w_{\leq k}\varpi_j,v_{\leq k}\varpi_j)
 \simeq
 \bigodot_{\substack{u\in V_{\bw,v}\\q_u\leq k}}
       M_u^{\odot m_{u,k}^{\,j}}.
\end{equation}
\end{theorem}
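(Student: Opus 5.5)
Both sides of \eqref{eq:dominant-determinantial-factors} are self-dual simple modules in $\mathscr C_w$, so we compare their $\bw$-Lusztig parameters and use injectivity of the PBW parametrization. The $M_u$ are pairwise strongly commuting real simples, so the right-hand side is simple. Iterating \eqref{eq:pbw-simple-product}, its parameter is
\[
 \sum_{q_u\leq k} m_{u,k}(\Lambda)\,\ba^{\bw}(M_u)
 =\sum_{q_u\leq k} m_{u,k}(\Lambda)\,\eta_u^{\bw},
\]
by Proposition~\ref{prop:weave-parameters}. By Corollary~\ref{cor:prefix-parameters}, the left-hand side has parameter $\bdelta_k(\Lambda)$. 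Both expressions are linear in $\Lambda$. It therefore suffices to prove the coroot-valued identity
\begin{equation*}
 \sum_{\substack{u\in V_{\bw,v}\\q_u\leq k}}\eta_u^{\bw}\otimes\gamma_{u,k}
 =\sum_{\substack{e\in J_{\bw,v}\\e\leq k}}E_e\otimes\sigma_{e,k}^{-1}(\alpha_{i_e}^\vee)
\end{equation*}
in $\ZZ^r\otimes Q^\vee$, and then pair with $\Lambda$.

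To prove this, compare coordinates and use the triangular recursion \eqref{eq:etauw}. Because $\eta_u^{\bw}$ is unitriangular (Corollary~\ref{cor:weave-support}), it is enough to show that for each $e\in J_{\bw,v}$ with $e\leq k$, the $e$-coordinate of the left side equals $\sigma_{e,k}^{-1}(\alpha_{i_e}^\vee)$. I would argue by induction on $k$.

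\emph{Base case $k=q_u$.} A new vertex $u$ is created at step $k$. Its cycle has weight $1$ on the outgoing edge at the right end of the slice, and that edge has inversion coroot $\chi=\alpha_{i_k}^\vee$. The cycles of earlier vertices $u'$ are modified at the contraction according to the min rule, in the way recorded by $\gamma_{u'}(e_{u,\mathrm{nw}})$.

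\emph{Passing from $k-1$ to $k$.}
\begin{itemize}
\item At a selected (ascent) position, $\sigma_{e,k}=\sigma_{e,k-1}s_{i_k}$. On the slice, a new edge is appended on the right with zero cycle weight, and each old $\chi_a$ becomes $s_{i_k}\chi_a$. Hence $\gamma_{u,k}=s_{i_k}\gamma_{u,k-1}$, which matches the right side.
\item At an unselected (trivalent) position, $\sigma_{e,k}=\sigma_{e,k-1}$. Braid moves preserve $\gamma_{u,k}$ by \cite[Lemma~6.15]{bi2026monoidal}, so the only change comes from the contraction. There I would invoke \cite[Theorem~6.17]{bi2026monoidal}, the coroot identity announced in the introduction. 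I expect it to say precisely that the change in $\sum_u\eta_u\otimes\gamma_{u,k}$ at a trivalent step is $E_{q_u}\otimes\alpha_{i_k}^\vee$ plus corrections absorbed by the recursion \eqref{eq:etauw}.
\end{itemize}

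The main obstacle is this trivalent step. The min rule makes the cycle weights nonlinear: an incoming pair $(x,y)$ is sent to $\min\{x,y\}$, so weight is lost on the contracted edge. I must check that the lost contribution $\sum_{u'}(\gamma_{u'}(e_{\mathrm{ne}})-\min)\chi\,\eta_{u'}$, together with the new term, reorganizes exactly into $\eta_u\otimes\alpha_{i_k}^\vee$ via \eqref{eq:etauw}, using the northwest incoming weights. If the cited theorem is phrased differently, I would first try to verify it directly on one contraction. The tool there is that the right end of the slice before step $k$ already ends in $i_k$, with inversion coroot $\alpha_{i_k}^\vee$, and that the cycle weights on the northwest edge are exactly the coefficients defining $\eta_u$.

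Once the identity holds, pairing with $\Lambda$ gives equal parameters. Uniqueness of the self-dual simple module with a given parameter then yields \eqref{eq:dominant-determinantial-factors}, and specializing to $\Lambda=\varpi_j$ gives \eqref{eq:determinantial-factors}.
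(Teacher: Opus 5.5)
Your reduction is the same as the paper's. The right-hand side is simple with parameter $\sum_{q_u\leq k}m_{u,k}(\Lambda)\eta_u^{\bw}$ by \eqref{eq:pbw-simple-product} and Proposition~\ref{prop:weave-parameters}. The left-hand side has parameter $\bdelta_k(\Lambda)$ by Corollary~\ref{cor:prefix-parameters}. By the support statement of Corollary~\ref{cor:weave-support} (support, not unitriangularity, is what is used), everything comes down to
\[
 \sum_{\substack{u\in V_{\bw,v}\\ e\leq q_u\leq k}}(\eta_u^{\bw})_e\,\gamma_{u,k}
 =\sigma_{e,k}^{-1}(\alpha_{i_e}^\vee),
 \qquad e\in J_{\bw,v}\cap[1,k].
\]
The paper does not prove this by induction on $k$. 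It takes it wholesale from \cite[Theorem~6.17]{bi2026monoidal}, applied to $\mathbf h\bw$ at positions $L+e$ and $L+k$ with $L=\ell(w_0v^{-1})$, after discarding the first $L$ coordinates, which vanish because $\mathbf h$ creates no trivalent vertex. As written, your induction does not by itself establish the identity. Its only nontrivial step, the contraction, is handed back to that same theorem under a guessed formulation. Also, your ``base case $k=q_u$'' is really that contraction step; the actual base case is $k=0$, where both sides are empty sums.

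The trivalent step you flag as the main obstacle can be done by hand, and the min rule causes no trouble. At an unselected step $k$ creating $u$, the northeast incoming edge of $u$ is the freshly appended $i_k$-strand, which meets no vertex before $u$. So $\gamma_{u'}(e_{u,\mathrm{ne}})=0$ for every $u'<u$, and the contracted edge gets weight $\min\{\gamma_{u'}(e_{u,\mathrm{nw}}),0\}=0$. The lost weight is therefore $\gamma_{u'}(e_{u,\mathrm{nw}})$; your expression has $\mathrm{ne}$ where $\mathrm{nw}$ is meant. The braid moves do not change $\gamma_{u',k-1}$ by \cite[Lemma~6.15]{bi2026monoidal}. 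After them the slice ends in $i_k$, so that edge carries $\chi=\alpha_{i_k}^\vee$, and all other edges and their $\chi_a$ are unchanged by the contraction. Hence
\[
 \gamma_{u',k}=\gamma_{u',k-1}-\gamma_{u'}(e_{u,\mathrm{nw}})\,\alpha_{i_k}^\vee,
 \qquad
 \gamma_{u,k}=\alpha_{i_k}^\vee,
\]
and the left-hand side changes by
\[
 \Bigl(\eta_u^{\bw}-\sum_{u'<u}\gamma_{u'}(e_{u,\mathrm{nw}})\,\eta_{u'}^{\bw}\Bigr)\otimes\alpha_{i_k}^\vee
 =E_k\otimes\alpha_{i_k}^\vee
\]
by \eqref{eq:etauw}. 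This is exactly the new term on the right, since $\sigma_{k,k}=1$ and $\sigma_{e,k}=\sigma_{e,k-1}$ for $e<k$. Your ascent step is correct. Together with it, this gives a self-contained proof of the coroot identity that uses only braid invariance and the recursion \eqref{eq:etauw}. The paper's route is shorter but rests entirely on the external theorem.
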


\begin{proof}
Set $L=\ell(w_0v^{-1})$, the length of $\mathbf h$.
The unselected positions of $\beta=\mathbf h\bw$ are
$L+J_{\bw,v}$ by~\eqref{eq:prefix-demazure}, and its weave vectors
vanish in the first $L$ coordinates.
Apply the coroot identity of \cite[Theorem~6.17]{bi2026monoidal}
at positions $L+e$ and $L+k$.
After removing these initial zero coordinates, the selected
suffix is precisely $\sigma_{e,k}$. Thus, for every
$e\in J_{\bw,v}\cap[1,k]$,
\begin{equation}\label{eq:cycle-coroot-identity}
 \sum_{\substack{u\in V_{\bw,v}\\e\leq q_u\leq k}}
     (\eta_u^{\bw})_e\,\gamma_{u,k}
 =\sigma_{e,k}^{-1}(\alpha_{i_e}^\vee).
\end{equation}
Pairing with $\Lambda$ and using the definition of
$\bdelta_k(\Lambda)$ and Corollary~\ref{cor:weave-support} gives
\[
 \bdelta_k(\Lambda)
 =\sum_{\substack{u\in V_{\bw,v}\\q_u\leq k}}
       m_{u,k}(\Lambda)\eta_u^{\bw}.
\]
The coordinates outside $J_{\bw,v}\cap[1,k]$ vanish on both sides.
For fundamental weights this becomes
\begin{equation}\label{eq:prefix-vector-factorization}
 \bdelta_{k,j}
 =\sum_{\substack{u\in V_{\bw,v}\\q_u\leq k}}
       m_{u,k}^{\,j}\eta_u^{\bw}.
\end{equation}

The product on the right-hand side of
\eqref{eq:dominant-determinantial-factors} is self-dual and simple,
since its factors are real and pairwise strongly commuting.
By \eqref{eq:pbw-simple-product} and
Proposition~\ref{prop:weave-parameters}, its Lusztig parameter is
$\bdelta_k(\Lambda)$.
Corollary~\ref{cor:prefix-parameters} gives the same parameter
for the determinantial module on the left.
Injectivity of the parametrization and self-dual normalization
prove the isomorphism.
\end{proof}

\begin{corollary}[Quantum-minor factorization]\label{cor:quantum-minor-factorization}
Let $A_u=[M_u]$ under quiver Hecke categorification.
For $1\leq k\leq r$ and $\Lambda\in P_+$,
\[
 D(w_{\leq k}\Lambda,v_{\leq k}\Lambda)
 =\bigodot_{q_u\leq k}
       A_u^{\odot\langle\gamma_{u,k},\Lambda\rangle}.
\]
Here $\odot$ uses the same power of $q$ as the self-dual
convolution in Theorem~\ref{thm:determinantial-factorization}.
\end{corollary}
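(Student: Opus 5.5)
The plan is to pass from Theorem~\ref{thm:determinantial-factorization} to Grothendieck groups. The isomorphism~\eqref{eq:dominant-determinantial-factors} holds between self-dual simple modules, so I would apply the class map $[\,\cdot\,]$ to both sides and identify each side separately.

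On the left, $M(w_{\leq k}\Lambda,v_{\leq k}\Lambda)$ is by definition the self-dual simple module whose class is the unipotent quantum minor $D(w_{\leq k}\Lambda,v_{\leq k}\Lambda)$. This uses the conventions of the subsection on determinantial modules, and it requires $v_{\leq k}\leq w_{\leq k}$. That inequality holds because the selected positions in $[1,k]$ form a reduced subexpression of $v_{\leq k}$ inside $\bw_{\leq k}$.

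On the right, I use that categorification sends convolution to multiplication: $[X\circ Y]=[X][Y]$, and a grading shift multiplies a class by a power of $q$. So the class of the self-dual convolution $\bigodot_u M_u^{\odot m_u}$ is the product $\prod_u A_u^{m_u}$, taken in some fixed order and multiplied by the unique power of $q$ (more precisely $q^{1/2}$) that makes the module self-dual. By definition, this normalized product is $\bigodot_u A_u^{\odot m_u}$. The exponents are $m_{u,k}(\Lambda)=\langle\gamma_{u,k},\Lambda\rangle$, exactly as in~\eqref{eq:gamma-vc}.

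Two points need a short justification, and I expect them to be the only obstacle, a mild one:
\begin{itemize}
\item[(i)] The normalized product must not depend on the order of the factors. This holds because the $M_u$ pairwise strongly commute: their classes $q$-commute, so any two orderings differ by a power of $q$, and the self-dual normalization absorbs it.
\item[(ii)] Isomorphism is understood up to grading shift, but both sides are self-dual simple modules. A self-dual simple module has a unique self-dual shift, so the isomorphism holds on the nose and the classes agree exactly, not merely up to a power of $q$.
\end{itemize}
Combining the two identifications gives the stated equality for every $1\leq k\leq r$ and $\Lambda\in P_+$.
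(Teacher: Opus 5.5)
Your proposal is correct and follows the same route as the paper, which simply takes Grothendieck classes in \eqref{eq:dominant-determinantial-factors}. Your two remarks make explicit what the paper leaves implicit: the normalized product does not depend on the order of the factors, and no grading shift remains between self-dual simple modules.
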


\begin{proof}
Take Grothendieck classes in
\eqref{eq:dominant-determinantial-factors}.
\end{proof}

\begin{remark}\label{rem:prefix-indexing}
The upper bound $q_u\leq k$ in
\eqref{eq:determinantial-factors} is inclusive: a vertex created
at the $k$th step can contribute to the minor at that step.
Also, the relevant parameter is the prefix vector
$\bdelta_{k,j}$, which agrees with $\bdelta_{\bw,v}^{\,j}$ when
$k=r$ but need not agree with it for $k<r$.
\end{remark}
\subsection{Recognizing seed monomials and recovering multiplicities}

The triangular recursion~\eqref{eq:etauw} also recovers the
multiplicities of a seed monomial directly from its parameters.
List the trivalent vertices as $u_1<\cdots<u_N$, and define
the coordinate projection
\[
 \pi_J:\ZZ^r\longrightarrow\ZZ^N,\qquad
 \pi_J(\ba)=(a_{q_{u_1}},\ldots,a_{q_{u_N}})^{\mathsf T}.
\]
Let $C=(c_{ab})$ be the strictly upper triangular matrix
\[
 c_{ab}=
 \begin{cases}
 \gamma_{u_a}(e_{u_b,\mathrm{nw}}),&a<b,\\
 0,&a\geq b,
 \end{cases}
\]
and let the $b$th column of $H$ be $\pi_J(\eta_{u_b}^{\bw})$.
The recursion~\eqref{eq:etauw} gives
\begin{equation}\label{eq:triangular-inverse}
 H=I_N+HC,\qquad H^{-1}=I_N-C.
\end{equation}
Thus $H$ is an upper unitriangular integer matrix.
Here a \emph{monomial in the seed} means a normalized convolution
of the $M_u$ with nonnegative integer exponents, including at
frozen vertices.

\begin{proposition}\label{prop:seed-monomial}
Let $X$ be a self-dual simple module in $\mathscr C_w$, and put
$\ba=\ba^{\bw}(X)$. Then $X$ is a monomial in $\mathbf s(v,\bw)$
if and only if
\[
 \supp\ba\subseteq J_{\bw,v},
 \qquad
 \mathbf b:=(I_N-C)\pi_J(\ba)\in\ZZ_{\geq0}^N.
\]
When these conditions hold, its unique factorization in this seed is
\[
 X\simeq\bigodot_{a=1}^N M_{u_a}^{\odot b_a},
 \qquad
 b_a=a_{q_{u_a}}-
       \sum_{b>a}\gamma_{u_a}(e_{u_b,\mathrm{nw}})\,a_{q_{u_b}}.
\]
\end{proposition}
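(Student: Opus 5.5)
The plan is to reduce everything to linear algebra on parameter vectors, using three facts: Proposition~\ref{prop:weave-parameters}, additivity \eqref{eq:pbw-simple-product}, and injectivity of the PBW parametrization up to grading shift, the shift being fixed by self-duality.

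\emph{Necessity.} Suppose $X\simeq\bigodot_a M_{u_a}^{\odot b_a}$ with $b_a\in\ZZ_{\geq0}$.
\begin{itemize}
\item The $M_u$ are real and pairwise strongly commuting, so the normalized convolution is simple.
\item By \eqref{eq:pbw-simple-product} and Proposition~\ref{prop:weave-parameters}, $\ba=\sum_a b_a\eta_{u_a}^{\bw}$.
\item Corollary~\ref{cor:weave-support} gives $\supp\ba\subseteq J_{\bw,v}$.
\item Applying $\pi_J$ gives $\pi_J(\ba)=H\mathbf b$, so by \eqref{eq:triangular-inverse} we get $\mathbf b=H^{-1}\pi_J(\ba)=(I_N-C)\pi_J(\ba)$, which is nonnegative.
\end{itemize}
This also shows the exponents are uniquely determined by $\ba$, hence by $X$. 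Uniqueness of the factorization follows.

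\emph{Sufficiency.} Assume $\supp\ba\subseteq J_{\bw,v}$ and $\mathbf b:=(I_N-C)\pi_J(\ba)\in\ZZ_{\geq0}^N$.
\begin{itemize}
\item Form $Y=\bigodot_a M_{u_a}^{\odot b_a}$, which is a self-dual simple module in $\mathscr C_w$.
\item As above, $\ba^{\bw}(Y)=\sum_a b_a\eta_{u_a}^{\bw}$, which is supported in $J_{\bw,v}$.
\item Its projection is $\pi_J(\ba^{\bw}(Y))=H\mathbf b=\pi_J(\ba)$.
\item $\pi_J$ is injective on vectors supported in $J_{\bw,v}$, since it just lists those coordinates: $J_{\bw,v}=\{q_{u_1},\dots,q_{u_N}\}$ via the bijection $V_{\bw,v}\to J_{\bw,v}$.
\item Hence $\ba^{\bw}(Y)=\ba$, and injectivity of the parametrization together with self-duality gives $X\simeq Y$.
\end{itemize}

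\emph{Explicit formula.} Expand $(I_N-C)\pi_J(\ba)$ row by row. Row $a$ reads
\[
 b_a=a_{q_{u_a}}-\sum_b c_{ab}\,a_{q_{u_b}},
\]
and $c_{ab}$ vanishes unless $a<b$, in which case $c_{ab}=\gamma_{u_a}(e_{u_b,\mathrm{nw}})$. This is the stated formula.

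\emph{Matrix identity.} The only point needing care is checking \eqref{eq:triangular-inverse} with the right index conventions. The $b$th column of $H$ is $\pi_J(\eta_{u_b}^{\bw})$, and \eqref{eq:etauw} reads column-wise as
\[
 H_{\cdot b}=e_b+\sum_{a<b}c_{ab}H_{\cdot a},
\]
that is, $H=I_N+HC$. So $H(I_N-C)=I_N$. Since $C$ is strictly upper triangular, $I_N-C$ is invertible and $H^{-1}=I_N-C$.

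I expect no real obstacle. The mildest one is making sure that a ``monomial in the seed'' is a simple module, which needs the $M_u$ to be real and pairwise strongly commuting. That was noted when the $M_u$ were defined as members of a common monoidal seed.
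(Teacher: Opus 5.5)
Your proposal is correct and follows the paper's proof essentially step for step. Necessity uses additivity, Proposition~\ref{prop:weave-parameters}, Corollary~\ref{cor:weave-support} and $H^{-1}=I_N-C$; sufficiency builds $Y=\bigodot_a M_{u_a}^{\odot b_a}$, matches the $\pi_J$-projections, and then applies the support condition and injectivity of the parametrization. Your column-wise check that \eqref{eq:etauw} gives $H=I_N+HC$ is a detail the paper leaves implicit, and it is correct.
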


\begin{proof}
Suppose first that $X\simeq\bigodot_a M_{u_a}^{\odot b_a}$ with
$\mathbf b\in\ZZ_{\geq0}^N$. Additivity and
Proposition~\ref{prop:weave-parameters} give
$\ba=\sum_a b_a\eta_{u_a}^{\bw}$.
Corollary~\ref{cor:weave-support} gives the support condition,
and projection yields $\pi_J(\ba)=H\mathbf b$.
Equation~\eqref{eq:triangular-inverse} now determines $\mathbf b$.

Conversely, suppose that the two stated conditions hold.
The normalized convolution
$Y=\bigodot_a M_{u_a}^{\odot b_a}$ is self-dual and simple.
Its parameter is supported in $J_{\bw,v}$, and
\[
 \pi_J\bigl(\ba^{\bw}(Y)\bigr)
      =H\mathbf b=\pi_J(\ba).
\]
The support condition therefore gives
$\ba^{\bw}(Y)=\ba^{\bw}(X)$ in the full lattice $\ZZ^r$.
Injectivity implies $X\simeq Y$.
The displayed coordinate formula and uniqueness follow from
$\mathbf b=(I_N-C)\pi_J(\ba)$.
\end{proof}

\begin{corollary}\label{cor:multiplicity-from-parameters}
For $1\leq k\leq r$, $j\in I$, and $u\in V_{\bw,v}$ with
$q_u\leq k$, the multiplicity in
Theorem~\ref{thm:determinantial-factorization} is
\begin{equation}\label{eq:multiplicity-from-parameters}
 m_{u,k}^{\,j}
 =(\bdelta_{k,j})_{q_u}
  -\sum_{\substack{x>u\\q_x\leq k}}
       \gamma_u(e_{x,\mathrm{nw}})(\bdelta_{k,j})_{q_x}.
\end{equation}
The same formula holds for $m_{u,k}(\Lambda)$ after replacing
$\bdelta_{k,j}$ by $\bdelta_k(\Lambda)$.
\end{corollary}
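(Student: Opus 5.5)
The plan is to combine Theorem~\ref{thm:determinantial-factorization} with the uniqueness part of Proposition~\ref{prop:seed-monomial}. First, I will record that the prefix determinantial module $X=M(w_{\leq k}\varpi_j,v_{\leq k}\varpi_j)$ is a self-dual simple module in $\mathscr C_{w_{\leq k}}\subseteq\mathscr C_w$. By \eqref{eq:determinantial-factors} it is a monomial in the seed $\mathbf s(v,\bw)$, with exponent $b_a=m_{u_a,k}^{\,j}$ when $q_{u_a}\leq k$ and $b_a=0$ otherwise. By Corollary~\ref{cor:prefix-parameters}, its parameter is $\ba=\bdelta_{k,j}$.

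Next, I will apply Proposition~\ref{prop:seed-monomial} to $X$. Its factorization is unique, and it is given by $b_a=a_{q_{u_a}}-\sum_{b>a}\gamma_{u_a}(e_{u_b,\mathrm{nw}})a_{q_{u_b}}$. Comparing exponents, I get for $u=u_a$ with $q_u\leq k$
\[
 m_{u,k}^{\,j}=(\bdelta_{k,j})_{q_u}-\sum_{x>u}\gamma_u(e_{x,\mathrm{nw}})(\bdelta_{k,j})_{q_x}.
\]
To reach \eqref{eq:multiplicity-from-parameters}, I still need to restrict the sum to $q_x\leq k$. This follows from \eqref{eq:prefix-delta-coordinates}: $(\bdelta_{k,j})_e=0$ for $e>k$, so the terms with $q_x>k$ vanish.

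One step needs care. Uniqueness in Proposition~\ref{prop:seed-monomial} is uniqueness of the exponent vector, and it comes from the invertibility of $H$ in \eqref{eq:triangular-inverse}. Two exponent vectors giving isomorphic modules have equal parameters. By Proposition~\ref{prop:weave-parameters} and \eqref{eq:pbw-simple-product}, their projections $\pi_J$ are then equal images under $H$, so the vectors agree. Hence the exponents from Theorem~\ref{thm:determinantial-factorization} coincide with those produced by the formula.

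The general $\Lambda$ case runs the same way. I use \eqref{eq:dominant-determinantial-factors} and $\ba=\bdelta_k(\Lambda)$, whose coordinates also vanish beyond $k$. The main obstacle is only this bookkeeping of the vanishing coordinates. A direct check via \eqref{eq:prefix-vector-factorization} and $(I_N-C)H=I_N$ would serve equally well.
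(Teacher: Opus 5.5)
Your proposal is correct and is essentially the paper's proof: apply Proposition~\ref{prop:seed-monomial} to the factorization of Theorem~\ref{thm:determinantial-factorization}, take the parameter $\bdelta_k(\Lambda)$ from Corollary~\ref{cor:prefix-parameters}, and drop the terms with $q_x>k$ because the coordinates of $\bdelta_k(\Lambda)$ beyond $k$ vanish. Your additional remarks make explicit two points the paper leaves implicit: uniqueness of the exponent vector follows from the unitriangular matrix $H$, and the vertices with $q_u>k$ carry exponent zero.
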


\begin{proof}
Apply Proposition~\ref{prop:seed-monomial} to the factorization
in Theorem~\ref{thm:determinantial-factorization}.
The parameter of the determinantial module is
$\bdelta_k(\Lambda)$ by Corollary~\ref{cor:prefix-parameters}.
Its coordinates beyond $k$ vanish, so the terms with $q_x>k$
drop out of the multiplicity formula.
Taking $\Lambda=\varpi_j$ gives
\eqref{eq:multiplicity-from-parameters}.
\end{proof}

\begin{remark}[Computing the multiplicities]\label{rem:multiplicity-algorithm}
The formula gives a calculation in two stages.
First compute $\bdelta_k(\Lambda)$ by scanning the positions
$e=k,k-1,\ldots,1$, starting with $\lambda=\Lambda$.
At an unselected position, record
$\langle\alpha_{i_e}^\vee,\lambda\rangle$ and leave
$\lambda$ unchanged.
At a selected position, record zero and replace $\lambda$ by
\[
 s_{i_e}\lambda
 =\lambda-\langle\alpha_{i_e}^\vee,\lambda\rangle\alpha_{i_e}.
\]
Immediately before position $e$ is processed, one has
$\lambda=\sigma_{e,k}\Lambda$, which proves the algorithm.
All coordinates beyond $k$ are zero.

Next, list
$V_k:=\{u\in V_{\bw,v}:q_u\leq k\}=\{u_1<\cdots<u_s\}$,
put $d_a=(\bdelta_k(\Lambda))_{q_{u_a}}$, and read the local
cycle weights $c_{ab}=\gamma_{u_a}(e_{u_b,\mathrm{nw}})$.
Then
\[
 m_{u_a,k}(\Lambda)=d_a-\sum_{b>a}c_{ab}d_b.
\]
Thus, once the local cycle weights are known, the
multiplicities are obtained directly from the Weyl-group
calculation of the parameter vector.
\end{remark}

\subsection{Examples}\label{subsec:examples}

\begin{example}[A factorization in type $A_3$]\label{ex:A3-factorization}

Let
\[
 \bw=(1,3,2,1),\qquad v=s_2,
 \qquad \mathbf h=(1,2,3,2,1).
\]
The word $\mathbf h$ is reduced for $w_0s_2$, and the leftmost
reduced subexpression of $v$ in $\bw$ selects only position $3$.
Thus $J_{\bw,v}=\{1,2,4\}$.
Label the trivalent vertices $u_1<u_2<u_3$, so that
$(q_{u_1},q_{u_2},q_{u_3})=(1,2,4)$, and abbreviate
$M_a=M_{u_a}$ and $\eta_a=\eta_{u_a}^{\bw}$.

We first compute the cycle weights directly from the weave.
In the braid calculations below, a string such as $12321$
denotes the word $(1,2,3,2,1)$.
At the first step, contracting the final $11$ in $123211$
creates $u_1$; its cycle has weights $(0,0,0,0,1)$ on the
resulting word $12321$.
Before the second contraction, use
\[
 12321\longrightarrow13231\longrightarrow13213.
\]
The weights of $\gamma_{u_1}$ become $(0,0,0,1,0)$, so
$\gamma_{u_1}(e_{u_2,\mathrm{nw}})=0$.
Contracting the appended $33$ creates $u_2$.
The third step appends $2$ and is an ascent, giving $132132$.
On this word, the weights of $\gamma_{u_1}$ and $\gamma_{u_2}$
are $(0,0,0,1,0,0)$ and $(0,0,0,0,1,0)$, respectively.
For the fourth step, use
\[
 132132\longrightarrow132312\longrightarrow123212
 \longrightarrow123121.
\]
The corresponding cycle weights are
\[
\begin{array}{c|c|c}
 \text{word} & \gamma_{u_1} & \gamma_{u_2}\\ \hline
 132132 &(0,0,0,1,0,0)&(0,0,0,0,1,0)\\
 132312 &(0,0,0,0,1,0)&(0,0,0,1,0,0)\\
 123212 &(0,0,0,0,1,0)&(0,1,0,0,0,0)\\
 123121 &(0,0,0,1,0,1)&(0,1,0,0,0,0)
\end{array}
\]
Appending $1$ and contracting the final $11$ creates $u_3$.
Thus the northwest incoming weights at $u_3$ are $1$ and $0$.
Consequently,
\[
 C=\begin{pmatrix}0&0&1\\0&0&0\\0&0&0\end{pmatrix},
 \qquad
 \eta_1=E_1,\qquad
 \eta_2=E_2,\qquad
 \eta_3=E_1+E_4.
\]
This calculation uses only the local cycle rules.
Proposition~\ref{prop:weave-parameters} identifies these vectors
with the Lusztig parameters of $M_1,M_2,M_3$.

Next, the only selected reflection is $s_2$ at position $3$.
For $k=4$, the coroots in
\eqref{eq:prefix-delta-coordinates} are
\[
 \sigma_{1,4}^{-1}\alpha_1^\vee
   =\alpha_1^\vee+\alpha_2^\vee,
 \qquad
 \sigma_{2,4}^{-1}\alpha_3^\vee
   =\alpha_2^\vee+\alpha_3^\vee,
 \qquad
 \sigma_{4,4}^{-1}\alpha_1^\vee=\alpha_1^\vee.
\]
The full prefix calculation is therefore
\[
\begin{array}{c|ccc}
 k & \bdelta_{k,1}&\bdelta_{k,2}&\bdelta_{k,3}\\ \hline
 1&E_1&0&0\\
 2&E_1&0&E_2\\
 3&E_1&E_1+E_2&E_2\\
 4&E_1+E_4&E_1+E_2&E_2
\end{array}
\]
A self-dual simple module with parameter
$\mathbf d=(d_1,d_2,0,d_4)\in\ZZ_{\geq0}^4$ is a monomial
in this seed exactly when $d_1\geq d_4$.
Its multiplicities are then
\[
 (m_1,m_2,m_3)=(d_1-d_4,d_2,d_4).
\]
In particular, writing
$X_j=M(w\varpi_j,s_2\varpi_j)$, we obtain
\[
 X_1\simeq M_3,\qquad
 X_2\simeq M_1\odot M_2,\qquad
 X_3\simeq M_2.
\]
The middle identity is a nontrivial factorization, while the
first illustrates the subtraction of a cycle contribution in
recovering the multiplicities.
Conversely, the self-dual simple module $Y\in\mathscr C_w$
with parameter $E_4$ satisfies the support condition but has
\[
 (I_3-C)\pi_J\bigl(\ba^{\bw}(Y)\bigr)=(-1,0,1)^{\mathsf T}.
\]
Thus $Y$ is not a monomial in this seed: the support condition
alone does not suffice.
\end{example}

\begin{example}[Multiplicity two in type $D_4$]\label{ex:D4-multiplicity}
Number the vertices so that $2$ is adjacent to $1,3,4$.
Set
\[
 c=s_1s_3s_4s_2,\qquad v=c,\qquad
 w=s_2c,\qquad \bw=(2,1,3,4,2).
\]
The word $\bw$ is reduced. Since $c^3=w_0$ is represented
by the reduced word $(1,3,4,2)^3$, we may choose
\[
 \mathbf h=(1,3,4,2,1,3,4,2)
\]
as a reduced word for $w_0v^{-1}=c^2$.
The leftmost reduced subexpression for $v$ selects positions
$2,3,4,5$, so $J_{\bw,v}=\{1\}$.
Thus there is a single trivalent vertex $u$, with
$q_u=1$ and $\eta_u^{\bw}=E_1$.
Consequently, $M_u=L(2)$, where $L(2)$ is the self-dual
simple module over $R(\alpha_2)$.

The cycle multiplicity can be computed without any braid
moves. The first letter of $\bw$ contracts with the last
letter of $\mathbf h$, creating $u$.
The remaining letters are ascents and complete
$\mathbf h(1,3,4,2)$ to a reduced expression of $w_0$.
On the final slice, $\gamma_u$ has weight $1$ on its eighth
edge and weight $0$ elsewhere. Therefore
\[
 \gamma_{u,5}
 =s_2s_4s_3s_1(\alpha_2^\vee)
 =\alpha_1^\vee+2\alpha_2^\vee
   +\alpha_3^\vee+\alpha_4^\vee.
\]
Indeed, the first three reflections, applied from right to
left, add the three outer coroots to $\alpha_2^\vee$;
applying $s_2$ then gives the displayed highest coroot.
Hence
\[
 (m_{u,5}^{\,1},m_{u,5}^{\,2},m_{u,5}^{\,3},m_{u,5}^{\,4})
 =(1,2,1,1).
\]
Equivalently, the parameter formula gives
$\bdelta_{5,2}=2E_1$ and $\bdelta_{5,j}=E_1$ for
$j\in\{1,3,4\}$.
The factorization theorem yields
\[
 M(w\varpi_2,v\varpi_2)\simeq L(2)^{\odot2},
 \qquad
 M(w\varpi_j,v\varpi_j)\simeq L(2)
 \quad(j=1,3,4).
\]
Thus a fundamental weight already produces multiplicity
$2$, arising from the coefficient of the central coroot
in the highest coroot of $D_4$.
\end{example}

\end{document}